\theoremstyle{plain}
\newtheorem{theorem}{Theorem}[section]
\newtheorem{lemma}[theorem]{Lemma}
\theoremstyle{definition}
\newtheorem{definition}[theorem]{Definition}
\newtheorem{example}[theorem]{Example}
\newtheorem{corollary}[theorem]{Corollary}
\newtheorem{remark}[theorem]{Remark}
\newtheorem{fact}[theorem]{Fact}
\newtheorem{notation}[theorem]{notation} 
\newtheorem{Notation and Remark}[theorem]{Notation and Remark}
\newtheorem{proposition}[theorem]{Proposition}
\numberwithin{equation}{section}
\newcommand{\Stab}[1][R]{Stab_{\alfa}(#1)}
\DeclareMathOperator{\alfa}{\alpha}
\newcommand{\PrPol}[1][R]{\mathcal{P}(#1)}
\newcommand{\PolFun}[1][R]{\mathcal{F}(#1)}
\newcommand{\UPF}[1][R]{\mathcal{F}(#1)^{\times}}
\newcommand{\Aut}[1][G]{Aut(#1)}
\begin{document}

\openup.25em
\title[AMS Article Template]{On the group of unit-valued polynomial functions }

\author{Amr Ali Al-Maktry}
\address{\hspace{-12pt}Department of Analysis and Number Theory (5010) \\ 
	Technische Universit\"at Graz \\
Kopernikosgasse 24/II  \\
	8010 Graz, Austria}
\curraddr{}
\email{almaktry@math.tugraz.at}
\thanks{}


\subjclass[2010]{  13F20, 11T06,  11A07, 
	 , 20B05,  16U60, 12E10, 05A05,  20D40}
\keywords{ Finite Commutative rings, polynomial ring, polynomial functions,  dual  numbers, unit-valued polynomials, 
	 polynomial permutations, the group of unit-valued polynomial functions, groups from number theory,  semidirect product}
\date{}

\dedicatory{}

\maketitle

\begin{abstract}
	Let $R$ be a finite commutative ring  with  $1\ne 0$. The set $\mathcal{F}(R)$ 
	of polynomial functions on $R$ is a finite commutative ring with    pointwise operations.
	Its group of units   $\mathcal{F}(R)^\times$ is  just the set  of all unit-valued polynomial functions,
	that is the set of polynomial functions which map $R$ into its group of units.		  
	We show  that $\mathcal{P}_R(R[x]/(x^2))$ the group of polynomial permutations on the ring  
	$R[x]/(x^2)$,  consisting of permutations  represented by polynomials  over $R$,  
	is embedded in a semidirect product of $\mathcal{F}(R)^\times$ by $\mathcal{P}(R)$ the 
	group of polynomial permutations on $R$.  In particular, when $R=\mathbb{F}_q$,  we prove 
	that   $\mathcal{P}_{\mathbb{F}_q}(\mathbb{F}_q[x]/(x^2))\cong 
	\mathcal{P}(\mathbb{F}_q) \ltimes_\theta \mathcal{F}(\mathbb{F}_q)^\times$.
	Furthermore,  we count unit-valued polynomial functions $\pmod{p^n}$ and obtain  canonical
	representations for these functions.
	\keywords{Finite Commutative rings\and Unit-valued polynomial functions\and Permutation polynomials\and polynomial functions\and   Dual  numbers\and semidirect product}
\end{abstract}

\section{Introduction}
\label{intro}

Throughout this paper   $R$ is a finite commutative ring with unity $1\ne 0$. We denote by
$R^{\times}$  the group of units of $R$.  A function $F\colon R\longrightarrow R$ is called 
a polynomial function on $R$ if there exists a polynomial $f\in R[x]$ such that 
$F(r)=f(r)$ for each $r\in R$. In this case, we say that $f$ induces (represents) $F$ or $F$
is induced (represented) by $f$. If $F$ is a bijection,  we say that $F$ is a \emph{polynomial
	permutation} on $R$ and $f$  is a \emph{permutation polynomial} on $R$ (or $f$ permutes $R$). 
When $F$ is the constant zero, $f$ is called a null polynomial on $R$ or shortly, null on $R$. 
The set of all null polynomials  is an ideal of $R[x]$, which we denote  by $N_R$.

It is evident that the set $\PolFun $ of  all polynomial functions on $R$  is a monoid 
with composition of functions. Its group of invertible elements $\PrPol$ consists of polynomial 
permutations on $R$, and it is called the group of polynomial permutations on $R$. 
Further, $\PolFun $ is a ring with addition and multiplications defined pointwise. 

In this paper we are interested in the group of units of the pointwise ring structure on
$\PolFun$, which we denote by $\UPF$. We show that 
there is a relation between the group $\UPF$ and the group of polynomial permutations 
on $R[x]/(x^2)$, which is represented by polynomials with coefficients in $R$ only. 
Moreover, when $R=\mathbb{Z}_{p^n}$ the ring of integers modulo $p^n$ we find  the order 
of $\UPF[\mathbb{Z}_{p^n}]$ and give   canonical representations for its elements.

\section{Preliminaries}\label{Prel}

In this section, we introduce the concepts and notations used frequently in the paper. 
Throughout this paper whenever $A$ is a ring and  $f\in A[x]$,
let $[f]_A$ denote the polynomial function on $A$ represented by $f$. That is,
if $F$ is the function induced by $f$ on $R$, then $[f]_A=F$.

\begin{definition}\label{1}
	A polynomial $f\in R[x]$ is called  a \emph{unit-valued polynomial} if $f(r)\in R^{\times}$ 
	for each $r\in R$ \cite{UVPdf}. In this case $[f]_R$, the function induced by $f$ on $R$, 
	is called a \emph{unit-valued polynomial function}.
\end{definition}
Throughout this paper for every $f\in R[x]$, let $f'$ denote its first formal derivative.

Unit-valued polynomials and unit-valued polynomial functions have been appeared in the literature
within studying other structures. For example, Loper~\cite{UVPdf} has used unit-valued polynomials
for examining non-$D$-rings. One other example is seen in the criteria of permutation polynomials on
finite local rings. 
To illustrate  this we recall the well known fact.
\begin{fact}\cite[Theorem~3]{Necha} 
	Let $R$ be a local ring with maximal ideal $M$, and let $f\in R[x]$. Then $f$ is a permutation
	polynomial on $R$ if and only if the following conditions hold
	\begin{enumerate}
		\item  $\bar{f}$ is a permutation polynomial on the residue field $R/M$, where $\bar{f}$ denotes the reduction
		of $f$ modulo $M$;
		\item $f'(a)\ne 0 \mod M$ for every $a\in M$.
	\end{enumerate}
\end{fact}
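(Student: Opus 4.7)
The plan is to prove the two implications separately, leveraging the formal Taylor identity
\[
f(x+y) = f(x) + f'(x)\,y + y^2\,h(x,y),\qquad h\in R[x,y],
\]
which holds in every commutative polynomial ring (proved by expanding $(x+y)^n$ monomial-by-monomial), together with the nilpotence of $M$ and the strict containment $M\supsetneq M^2$ (Nakayama's lemma) that are available because $R$ is a finite local ring. The case $M=0$ is trivial since then $R$ is a field.

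For the forward direction, assume $f$ permutes $R$. Condition (1) is automatic: the Taylor identity gives $f(r+m)\equiv f(r)\pmod{M}$ for $m\in M$, so $f$ descends to a well-defined self-map $\bar f$ of the finite set $R/M$, and surjectivity is inherited from $f$. For (2), I would argue by contradiction: if $f'(a)\in M$ for some $a\in M$, then
\[
f(a+m)-f(a)=f'(a)\,m+m^2\,h(a,m)\in M^2\quad\text{for every }m\in M,
\]
so $f$ sends the $|M|$-element coset $a+M$ injectively into $f(a)+M^2$, contradicting $|M|>|M^2|$.

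For the reverse direction, assume (1) and (2), and prove injectivity (hence, by finiteness, bijectivity). Suppose $f(r)=f(s)$. Condition (1) forces $\bar r=\bar s$, so $s=r+m$ with $m\in M$, and the Taylor identity gives
\[
0=f'(r)\,m+m^2\,h(r,m)=m\bigl(f'(r)+m\,h(r,m)\bigr).
\]
The bracketed factor reduces to $f'(r)$ modulo $M$; when this is a unit, rearranging yields $m=-f'(r)^{-1}\,m^2\,h(r,m)\in M^2$, and iterating this substitution gives $m\in M^{2^k}$ for every $k$, hence $m=0$ by the nilpotence of $M$.

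The step I expect to be most delicate is the appeal to (2) in the reverse direction, because the $r$ arising there need not lie in $M$. In the standard treatment this is handled by reading (2) uniformly — that is, $\bar f'$ has no zero in $R/M$ — and I would bridge the gap by applying the argument to each shifted polynomial $f(x+r_0)$ as $r_0$ ranges over a transversal of $R/M$; this relocates the evaluation of $f'$ to the coset $M$, where (2) as stated applies, and combined with (1) yields injectivity on every coset of $M$ and hence on all of $R$.
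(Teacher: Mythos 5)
The paper itself offers no proof of this Fact --- it is quoted from Nechaev's paper --- so there is no in-house argument to compare against; I can only assess your proposal on its own terms. Your forward direction is correct and standard: reduction mod $M$ preserves surjectivity, and if $f'(a)\in M$ then the Taylor identity compresses the coset $a+M$ into a coset of $M^2$, impossible for an injective map since $M^2\subsetneq M$ by Nakayama. The core of your reverse direction (Taylor expansion on a coset of $M$ plus nilpotence of $M$) is also the right mechanism, though you could conclude $m=0$ in one step by noting that $f'(r)+m\,h(r,m)$ is a unit of the local ring rather than iterating into $M^{2^k}$.

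The gap is exactly where you flagged it, and your proposed bridge does not close it. For the shifted polynomial $g(x)=f(x+r_0)$, the injectivity argument on the coset $M$ needs $g'(a)=f'(a+r_0)$ to be a unit for $a\in M$; that is an evaluation of $f'$ on the coset $r_0+M$, not on $M$, so hypothesis (2) as printed still does not apply unless $r_0\in M$. Indeed no bridge can exist, because with (2) read literally (``for every $a\in M$'') the reverse implication is false: take $R=\mathbb{Z}_4$, $M=(2)$, $f(x)=x+x^2+x^3$; then $\bar f$ permutes $\mathbb{F}_2$ and $f'(0)\equiv f'(2)\equiv 1\pmod 2$, yet $f(1)=f(3)=3$. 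What is true --- and what the paper itself invokes in the sentence immediately after the Fact, where it says condition (2) forces $f'$ to be unit-valued on all of $R$ --- is the version of (2) with ``$a\in R$''; with that hypothesis your coset argument applies verbatim at every $r\in R$ and the shifting detour is unnecessary. A smaller point: the case $M=0$ is not ``trivial'' in your favour, since (2) then reads $f'(0)\ne 0$, which a permutation polynomial of a field need not satisfy (e.g.\ $x^3$ on $\mathbb{F}_5$); the statement implicitly assumes $R$ is not a field.
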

Indeed, the second condition of the previous fact 
requires that $f'$ to be a unit-valued polynomial  on $R $ (equivalently $\bar{f}'$ is  a unit-valued polynomial
on $R/M$), that is $[f']_R$ is a unit-valued polynomial function.

To prove our first fact about unit-valued polynomial functions we need the following lemma, 
which is a special case of a general property proved in \cite{Regunit}. 

\begin{lemma}\label{reg}
	Let $R$ be a finite commutative ring with  $1\ne 0$. Then every regular element 
	(i.e., element which is not a zero divisor) is invertible.
\end{lemma}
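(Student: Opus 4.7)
The plan is to use the standard pigeonhole argument that works in any finite set: show that multiplication by a regular element is an injective self-map of $R$, and then invoke finiteness to upgrade injectivity to surjectivity, which yields an inverse.

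More concretely, I would fix a regular element $a\in R$ and consider the map $\mu_a\colon R\longrightarrow R$ defined by $\mu_a(x)=ax$. The regularity of $a$ means exactly that $ax=ay$ implies $a(x-y)=0$ and hence $x=y$, so $\mu_a$ is injective. Since $R$ is finite, any injective function from $R$ to itself is automatically bijective, so $\mu_a$ is surjective. In particular there exists $b\in R$ with $ab=\mu_a(b)=1$, which (by commutativity of $R$) means $a$ is a unit with inverse $b$.

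There is essentially no obstacle here; the only subtle point worth spelling out is the use of commutativity to conclude that a right inverse is a two-sided inverse, and the implicit appeal to the fact that $1\in R$ (guaranteed by the hypothesis $1\ne 0$) so that the equation $ab=1$ has meaning. The argument does not need anything about the ambient structure of $\PolFun$ or $\UPF$ and could equally well be stated for any finite commutative ring with identity, which is exactly the setting of the lemma.
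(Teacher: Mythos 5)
Your proof is correct and follows essentially the same route as the paper: both define the multiplication map $r\mapsto ar$, use regularity of $a$ to get injectivity, and invoke finiteness of $R$ to conclude surjectivity, producing $b$ with $ab=1$. No issues.
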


\begin{proof}
	Let $a\in R$ be a regular element. Define a function $\phi_a\colon R \longrightarrow R$ by
	$\phi_a(r) =ar$ for each $r\in R$. We claim that $\phi_a$ is injective, and hence 
	it is surjective since $R$ is finite. Because, if there exist $r_1,r_2\in R$ such that
	$r_1\ne r_2$ and  $\phi_a(r_1) =ar_1=ar_2= \phi_a(r_2)$, then $a(r_1-r_2)=0$. 
	But this implies that $a$ is a zero divisor, which is a contradiction. 
	Thus  $\phi_a$ is bijective, and there exists a unique element $r$ 
	such that $1=\phi_a(r)=ar$.
\end{proof}

From now on, let $"\cdot"$ denote the pointwise multiplication of functions.

\begin{fact}\label{2}
	Let $\PolFun$ be the set of polynomial functions on $R$. Then $\PolFun$ is
	a finite commutative ring with nonzero unity, where addition and multiplication
	are defined pointwise. Moreover, $\PolFun^{\times}$ is an abelian group and;
	\[\PolFun^{\times}=\{F\in\PolFun: F \text{ is a unit-valued polynomial function}\}.\]
\end{fact}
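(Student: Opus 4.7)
The plan is to establish the ring structure on $\PolFun$ first, then identify its units with the unit-valued polynomial functions by combining a direct pointwise argument with Lemma~\ref{reg}.

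First, I would verify that $\PolFun$ is closed under pointwise addition and multiplication: if $F=[f]_R$ and $G=[g]_R$, then $F+G=[f+g]_R$ and $F\cdot G=[fg]_R$, so both lie in $\PolFun$. The constant polynomial $1$ induces the multiplicative identity $[1]_R$, which is nonzero since $1\ne 0$ in $R$. The ring axioms (commutativity, associativity, distributivity) are inherited from $R$ because the operations are pointwise. Finiteness is immediate from $\PolFun\subseteq R^R$ and the finiteness of $R$.

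Next, for the characterization of $\PolFun^\times$, the easy direction is: if $F\in\PolFun^\times$ with inverse $G\in\PolFun$, then $F(r)G(r)=1$ for every $r\in R$, so $F(r)\in R^\times$, i.e.\ $F$ is unit-valued. For the converse, the natural approach is to avoid constructing the inverse polynomial by hand and instead invoke Lemma~\ref{reg} inside the finite commutative ring $\PolFun$. Concretely, I would show that any unit-valued $F\in\PolFun$ is a regular element of $\PolFun$: if $F\cdot H=0$ in $\PolFun$, then $F(r)H(r)=0$ in $R$ for every $r$, and since each $F(r)$ is a unit we conclude $H(r)=0$ for all $r$, so $H=0$. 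Hence $F$ is regular in the finite commutative ring $\PolFun$, and Lemma~\ref{reg} supplies $G\in\PolFun$ with $F\cdot G=1$, so $F\in\PolFun^\times$.

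Finally, $\PolFun^\times$ being an abelian group is automatic once $\PolFun$ is known to be a commutative ring with identity. There is no real obstacle here; the only subtle point is recognizing that one should not attempt a direct construction of the inverse polynomial (which would require producing an explicit $g\in R[x]$ with $[fg]_R=[1]_R$), but instead transport the problem into the finite commutative ring $\PolFun$ itself and apply Lemma~\ref{reg}. This both shortens the proof and makes it independent of any particular representing polynomial.
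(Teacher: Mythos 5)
Your proposal is correct and follows essentially the same route as the paper: both directions are handled identically, and in particular the key step of showing a unit-valued $F$ is a regular element of the finite commutative ring $\PolFun$ and then invoking Lemma~\ref{reg} is exactly the paper's argument (you merely spell out the regularity check that the paper calls ``easy to see'').
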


\begin{proof}
	We leave to the reader t check that $\PolFun$ forms a finite commutative ring under pointwise
	operations with with unity is the constant $1$ on $R$, which we denote by $1_{\PolFun}$.
	
	Moreover,	since $\PolFun$ is commutative ring, it follows that $\PolFun^{\times}$ is an abelian group.
	Now, it is easy to see  that every unit-valued polynomial function is regular,
	and hence it is invertible by  Lemma~\ref{reg}.
	Thus $\UPF$ contains every unit-valued polynomial functions.
	
	For the other inclusion,
	let $F\in \UPF$. Then there exists $F^{-1}\in \UPF$ such that  $F\cdot F^{-1}=1_{\PolFun}$. 
	Hence $F(r)F^{-1}(r)=1$ for each $r\in R$, whence $F(r)\in R^{\times}$ for each $r\in R$. 
	Therefore $F$ is a unit-valued polynomial function by Definition~\ref{1}. 
	This completes the proof.
\end{proof}
\begin{remark}
	When $R$ is an infinite commutative ring, still true that $\PolFun$ is a commutative ring (infinite) and every element
	of $\UPF$ is a unit-valued polynomial function.
	However, $\UPF$ can be properly contained in the set of all unit-valued polynomial functions.  
\end{remark}
The following example illustrates the previous remark. 
\begin{example}
	Let $R=\{\frac{a}{b}: a,b\in \mathbb{Z}, b\ne 0\text{ and } 2\nmid b\}$. Then the polynomial
	$f=1+2x$ is a unit-valued polynomial on $R$, and $F=[f]_R$ is a unit-valued polynomial function.
	We claim that $F$ has no inverse in $\PolFun$. Assume, on the contrary, that $F$ is invertible, that is 
	there exists $F_1\in \PolFun$ such that $F\cdot F_1=1_{\PolFun}$, i.e., $F(r)F_1(r)=1$ for every $r\in R$.
	Now, since $F_1\in\PolFun$, there exists $f_1\in R[x]$ such that $F_1=[f_1]_R$.
	Then the polynomial $h(x)=(1+2x)f_1(x) -1$  is of positive degree. Further, $h$ has infinite roots
	in $R$ since $h(r)=F(r)F_1(r)-1=0$ for every $r\in R$, which contradicts the fundamental theorem of algebra. 
\end{example}
\begin{remark}
	For a commutative $R$, the ring $R[x]/(x^2)$ is called the ring of dual numbers over $R$. 
	This ring can be viewed as the ring  $R[\alpha]=\{a+b\alpha: a,b \in R, \alpha^2=0\}$,
	where $\alpha$ denotes the element $x+(x^2)$. In this construction $R$ is a subring of 
	$R[\alpha]$. Therefore every polynomial $g\in R[x]$ induces two functions on $R[\alpha]$
	and $R$ respectively, namely $[g]_{R[\alpha]}$ and its restriction (on $R$) $[g]_R$.
\end{remark}

The following fact about the polynomials of $R[\alpha]$ can be proved easily.

\begin{fact}\label{3}
	Let $R$ be a commutative ring, and 
	$a,b\in R$.
	\begin{enumerate}
		\item
		Let $g\in R[x]$. Then
		$g(a+b\alfa)=g(a)+ bg'(a)\alfa$.
		
		\item
		Let $g\in R[\alfa][x]$. Then there exist unique  $g_1, g_2 \in R[x]$
		such that $g =g_1 +g_2 \alfa$  and
		
		$g(a+b\alfa)=g_1(a)+ (bg_1'(a)+g_2(a))\alfa$.\label{3-b}
		
	\end{enumerate}
\end{fact}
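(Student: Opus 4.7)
The plan is to handle part (1) by a direct expansion, and then to reduce part (2) to part (1) via the natural decomposition of $R[\alpha][x]$.

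For part (1), I would write $g(x)=\sum_{i=0}^{n} c_i x^i$ with $c_i\in R$ and expand $(a+b\alpha)^i$ by the binomial theorem. The key observation is that $\alpha^2=0$, so $\alpha^k=0$ for all $k\geq 2$, which collapses each binomial expansion to $(a+b\alpha)^i = a^i + i a^{i-1} b\alpha$. Summing over $i$ yields
\[
g(a+b\alpha)=\sum_{i=0}^{n} c_i a^i + \Bigl(\sum_{i=0}^{n} i c_i a^{i-1}\Bigr) b\alpha = g(a)+bg'(a)\alpha,
\]
which is exactly the claim. The only subtlety is being careful that the binomial theorem is being applied inside a commutative ring, which holds since $R[\alpha]$ is commutative.

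For part (2), I would first establish the uniqueness and existence of the decomposition $g=g_1+g_2\alpha$. Since $R[\alpha]=R\oplus R\alpha$ as an $R$-module, each coefficient of $g\in R[\alpha][x]$ is uniquely of the form $u+v\alpha$ with $u,v\in R$; collecting the $R$-parts into $g_1$ and the $\alpha$-parts into $g_2$ gives $g=g_1+g_2\alpha$ with $g_1,g_2\in R[x]$ unique. Then I would evaluate at $a+b\alpha$:
\[
g(a+b\alpha)=g_1(a+b\alpha)+g_2(a+b\alpha)\alpha.
\]
By part (1) applied to each of $g_1$ and $g_2$, this equals
\[
\bigl(g_1(a)+bg_1'(a)\alpha\bigr)+\bigl(g_2(a)+bg_2'(a)\alpha\bigr)\alpha.
\]
Expanding and using $\alpha^2=0$ kills the $bg_2'(a)\alpha^2$ term, leaving $g_1(a)+(bg_1'(a)+g_2(a))\alpha$ as required.

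There is no real obstacle here; the entire argument rests on the single identity $\alpha^2=0$ and on the fact that $R[\alpha]$ is commutative so that the binomial theorem applies. The only point requiring a bit of care is the uniqueness claim in (2), which I would justify by noting that if $g_1+g_2\alpha=h_1+h_2\alpha$ in $R[\alpha][x]$, then comparing coefficients of each power of $x$ in the free $R$-module $R\oplus R\alpha$ forces $g_1=h_1$ and $g_2=h_2$.
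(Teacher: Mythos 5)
Your proof is correct and is exactly the elementary argument the paper has in mind: the paper states Fact~3 without proof (``can be proved easily''), and the intended route is precisely the binomial expansion with $\alpha^2=0$ for part (1), followed by the unique $R$-module decomposition $R[\alpha]=R\oplus R\alpha$ and an application of part (1) to $g_1$ and $g_2$ for part (2). Nothing is missing.
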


\begin{fact}\label{4}
	Let $g\in R[x]$. Then $g$ is a null polynomial on $R$ if and only if $g\alpha$ is
	a null polynomial on $R[\alpha]$.
\end{fact}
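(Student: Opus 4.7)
The plan is to reduce both directions to a single evaluation formula obtained from Fact~\ref{3}\eqref{3-b}. View $g\alfa$ as an element of $R[\alfa][x]$; it has the form $g_1 + g_2\alfa$ with $g_1 = 0$ and $g_2 = g$. Applying Fact~\ref{3}\eqref{3-b} to an arbitrary element $a + b\alfa \in R[\alfa]$ gives
\[
(g\alfa)(a + b\alfa) \;=\; 0 + \bigl(b\cdot 0 + g(a)\bigr)\alfa \;=\; g(a)\alfa.
\]
This single identity carries both implications.

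For the forward direction, suppose $g$ is null on $R$. Then $g(a) = 0$ for every $a \in R$, so the displayed formula yields $(g\alfa)(a + b\alfa) = 0$ for every element $a + b\alfa$ of $R[\alfa]$; hence $g\alfa$ is null on $R[\alfa]$. For the converse, suppose $g\alfa$ is null on $R[\alfa]$. Restricting to elements of the form $a + 0\cdot\alfa$ with $a \in R$, the formula gives $g(a)\alfa = 0$ in $R[\alfa]$. Since every element of $R[\alfa]$ has a unique expression $c + d\alfa$ with $c, d \in R$, the equation $g(a)\alfa = 0$ forces $g(a) = 0$ in $R$. As $a \in R$ was arbitrary, $g$ is null on $R$.

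There is really no serious obstacle here: the work is entirely encapsulated in Fact~\ref{3}\eqref{3-b}, and the only subtlety worth flagging explicitly is the use of the uniqueness of the representation $c + d\alfa$ to conclude $g(a) = 0$ from $g(a)\alfa = 0$. Everything else is a direct substitution.
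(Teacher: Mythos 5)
Your proof is correct and takes essentially the same route as the paper's: both reduce the statement to the evaluation identity $(g\alpha)(a+b\alpha)=g(a)\alpha$ obtained from Fact~\ref{3} and then read off the two directions. The paper dismisses the converse as immediate because $R$ is a subring of $R[\alpha]$; your explicit appeal to the uniqueness of the representation $c+d\alpha$ merely fills in that small step.
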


\begin{proof}
	$(\Leftarrow)$ Immediately since $R$ is a subring of $R[\alpha]$.
	
	$(\Rightarrow)$ Let $a,b \in R$. Then, by Fact~\ref{3}-(\ref{3-b}),
	$$g(a+b\alpha)\alpha=(g(a)+g'(a)b\alpha)\alpha=g(a)\alpha+0=0\alpha=0.$$
\end{proof}

Recall from the introduction that $\PrPol[{R[\alpha]}]$ denotes the group of 
polynomial permutations on ${R[\alpha]}$. It is apparent that $\PrPol[{R[\alpha]}]$ 
is a finite set, being a subset of a finite set (i.e., a subset of the set of polynomial 
functions on  $R[\alpha]$).

\begin{definition}\label{5}
	Let 
	$\mathcal{P}_R(R[\alpha])=\{F\in \PrPol[{R[\alpha]}] : F=[f]_{R[\alpha]},\ f\in R[x]\}$.
\end{definition}

From now on, let $"\circ"$ denote the composition of functions (polynomials) and let $id_R$
denote the identity function on $R$.

\begin{fact}
	The set $\mathcal{P}_R(R[\alpha])$ is a subgroup of $\PrPol[{R[\alpha]}]$.
\end{fact}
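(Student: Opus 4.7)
The plan is to exploit the fact that $\mathcal{P}(R[\alpha])$ is a finite group, so a non-empty subset closed under composition is automatically a subgroup. This turns the problem into two easy verifications: non-emptiness and closure under composition.

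First, I would observe that $\mathcal{P}_R(R[\alpha])$ is non-empty, since the polynomial $x \in R[x]$ represents the identity function $id_{R[\alpha]}$, which is plainly a polynomial permutation of $R[\alpha]$. Next, for closure under composition, I would take $F, G \in \mathcal{P}_R(R[\alpha])$ with $F = [f]_{R[\alpha]}$ and $G = [g]_{R[\alpha]}$ for some $f, g \in R[x]$. Since composition of polynomial functions on a ring $A$ satisfies $[f]_A \circ [g]_A = [f(g(x))]_A$, and $f(g(x)) \in R[x]$ whenever $f, g \in R[x]$, we obtain $F \circ G = [f(g(x))]_{R[\alpha]}$. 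Because $F$ and $G$ are bijections on $R[\alpha]$, so is $F \circ G$; hence $F \circ G \in \mathcal{P}_R(R[\alpha])$.

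Finally I would invoke the standard fact that in any finite group, a non-empty subset closed under the binary operation is a subgroup. Since $\mathcal{P}(R[\alpha])$ is finite (as already noted in the excerpt, being a subset of the polynomial functions on the finite ring $R[\alpha]$), and $\mathcal{P}_R(R[\alpha])$ is a non-empty subset closed under composition, it follows that it is closed under inverses as well, and therefore is a subgroup of $\mathcal{P}(R[\alpha])$.

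There is no real obstacle here; the only subtlety worth flagging is that the inverse of an element of $\mathcal{P}_R(R[\alpha])$ is itself represented by a polynomial with coefficients in $R$, and this is obtained for free from the finite-group argument (if $F$ has order $n$ in $\mathcal{P}(R[\alpha])$, then $F^{-1} = F^{n-1}$, which is again represented by a polynomial in $R[x]$). No explicit construction of the inverse polynomial is needed.
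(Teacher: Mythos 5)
Your proof is correct and follows essentially the same route as the paper: exhibit $id_{R[\alpha]}$ as an element, check closure under composition using $[f]_{R[\alpha]}\circ[g]_{R[\alpha]}=[f\circ g]_{R[\alpha]}$ with $f\circ g\in R[x]$, and invoke finiteness to get inverses for free. Your closing remark about $F^{-1}=F^{n-1}$ is a nice explicit justification of the finiteness step, but otherwise the two arguments coincide.
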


\begin{proof}
	Evidently, $id_{R[\alpha]}\in \mathcal{P}_R(R[\alpha])$. Since $\mathcal{P}_R(R[\alpha])$ 
	is finite, it suffices to show that $\mathcal{P}_R(R[\alpha])$ is closed under composition.
	So if $F_1,F_2\in \mathcal{P}_R(R[\alpha])$, then $F_1,F_2$ are induced by 
	$f_1,f_2\in R[x]$ respectively by Definition~\ref{5}. Now 
	$F_1\circ F_2\in \PrPol[{R[\alpha]}]$ (being a composition of polynomial permutations 
	on $R[\alpha]$). But, it is evident that $F_1\circ F_2=[f_1\circ f_2]_{R[\alpha]}$ 
	with  $f_1\circ f_2\in R[x]$. Hence $F_1\circ F_2\in \mathcal{P}_R(R[\alpha])$ 
	by Definition~\ref{5}.
\end{proof}

\begin{remark}
	Let $f,g\in R[x]$. Then their composition $g\circ f$ induces a function on $R$, 
	which is the composition of the functions induced by  $f$ and $g$ on $R$. 
	Similarly, $f+g$ and $fg$ induce two functions  on $R$, namely the pointwise
	addition and multiplication  respectively of the functions induced by $f$ and $g$. 
	In terms of our notations this equivalent to the following:	
	\begin{enumerate}
		\item $[f\circ g]_R=[f]_R\circ [g]_R$;
		\item $[f+ g]_R=[f]_R+ [g]_R$;
		\item $[f g]_R=[f]_R\cdot [g]_R$.
	\end{enumerate}
	
	The above  equalities appear periodically  in our arguments in the next sections. 
\end{remark}

\section{The embedding of the group $\mathcal{P}_R(R[\alpha])$ in the Group $\PrPol \ltimes_\theta  \PolFun^\times$}\label{embed}

In this section we show that the group $\mathcal{P}_R(R[\alpha])$, which consists of
permutations represented by polynomials from $R[x]$,  is embedded in a semidirect 
product of the group of unit-valued polynomial functions on $R$, $\PolFun^\times$ by
the group of polynomial permutations on $R$, $\PrPol$, via a homomorphism $\theta$ 
defined in Lemma~\ref{7-aut} below.

\begin{lemma}\label{05}
	Let $F,F_1\in \UPF$ and $G\in \PrPol$. Then the following hold:
	\begin{enumerate}
		\item $F\circ G\in\UPF$;
		\item $(F\cdot F_1)\circ G=(F\circ G)\cdot(F_1\circ G)$;
		\item if $F^{-1}$ is the inverse of $F$, then $F^{-1}\circ G$ is the inverse of $F\circ G$.
	\end{enumerate}
\end{lemma}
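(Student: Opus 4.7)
The plan is to verify the three parts in order, since each is largely a routine pointwise computation and part (3) will follow cleanly from (1) and (2).

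For (1), I would observe that $F$ is represented by some $f\in R[x]$ and $G$ by some $g\in R[x]$, so $F\circ G=[f\circ g]_R$ is a polynomial function on $R$; evaluating at any $r\in R$ gives $(F\circ G)(r)=F(G(r))\in R^{\times}$ because $G(r)\in R$ and $F$ takes values in $R^\times$. Hence $F\circ G$ is a unit-valued polynomial function, which by Fact~\ref{2} is exactly the condition for membership in $\UPF$.

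For (2), the identity $(F\cdot F_1)\circ G=(F\circ G)\cdot(F_1\circ G)$ is a pointwise check: for each $r\in R$,
\[
((F\cdot F_1)\circ G)(r)=F(G(r))F_1(G(r))=((F\circ G)\cdot(F_1\circ G))(r).
\]
No ring-theoretic input is needed beyond the definitions of pointwise multiplication and composition.

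For (3), I would feed $F_1=F^{-1}$ into (2) to obtain $(F\cdot F^{-1})\circ G=(F\circ G)\cdot(F^{-1}\circ G)$. The left-hand side equals $1_{\PolFun}\circ G$, and since $1_{\PolFun}$ is the constant function $1$ on $R$, composing it with any function yields $1_{\PolFun}$ again. Part (1) guarantees $F^{-1}\circ G\in\UPF$, so this displays $F^{-1}\circ G$ as a genuine two-sided inverse of $F\circ G$ in the abelian group $\UPF$. There is no real obstacle here; the only subtlety to keep in mind is to invoke Fact~\ref{2} in (1) to convert ``unit-valued'' into ``invertible in $\PolFun$'', which is precisely what legitimises writing $F^{-1}$ in (3).
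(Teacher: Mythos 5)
Your proposal is correct and follows essentially the same route as the paper: part (1) by noting $F\circ G$ is a polynomial function taking values in $R^{\times}$ (with Fact~\ref{2} converting this into invertibility), part (2) by a pointwise computation, and part (3) by substituting $F^{-1}$ into (2) and using that $1_{\PolFun}\circ G=1_{\PolFun}$. No gaps.
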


\begin{proof}
	(1) Obviously,   $F\circ G$ is a polynomial function. Further,
	$F(G(r))\in R^\times$ for every $r\in R$, and so $F\circ G\in \UPF$.
	
	(2) Put $F_2=F\cdot F_1$.  So  if $r\in R$ then, by the pointwise multiplication
	on $\UPF$,
	$$F_2\circ G(r)=F_2(G(r))=F(G(r))F_1(G(r))=(F\circ G(r))(F_1\circ G(r)).$$
	Hence $(F\cdot F_1)\circ G=(F\circ G)\cdot (F_1\circ G)$.
	
	(3) By (2),
	$$(F^{-1}\circ G)\cdot (F\circ G)=(F^{-1}\cdot F)\circ G=1_{\PolFun}\circ G=1_{\PolFun}.$$
\end{proof}

An expert reader should notice that Lemma~\ref{05} defines a group action of $G$ 
on $\UPF$ in which every element of $G$ induces a homomorphism on $\UPF$, and
what is coming now is a consequence of that. However, we do not refer to this action
explicitly to avoid recalling  additional materials. In fact, our arguments are 
elementary and depend on direct calculations.

\begin{lemma}\label{7-aut}
	Let  $G\in \PrPol$. Then the map $\theta_G\colon\UPF \longrightarrow \UPF$ defined 
	by $(F)\theta_G=F\circ G$, for all $F\in \UPF$, is an automorphism of $\UPF$. 
	Furthermore, the map 
	$\theta\colon \PrPol \longrightarrow \Aut[\UPF]$   
	defined by $(G)\theta=\theta_G$ is a homomorphism.       
\end{lemma}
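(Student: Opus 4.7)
The plan is to verify in two stages: first, that $\theta_G$ is an automorphism of $\UPF$ for each fixed $G\in\PrPol$; second, that $G\mapsto\theta_G$ respects composition. Both stages are immediate consequences of Lemma~\ref{05} together with associativity of function composition; the only real delicacy is to keep the right-action convention $(F)\theta_G=F\circ G$ consistent when composing automorphisms in $\Aut[\UPF]$.

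For the first stage, Lemma~\ref{05}(1) shows that $F\circ G\in\UPF$ whenever $F\in\UPF$, so $\theta_G$ is a well-defined map from $\UPF$ into itself. Lemma~\ref{05}(2) shows that $\theta_G$ respects the pointwise product, and $\theta_G$ fixes the identity because precomposing the constant function $1_{\PolFun}$ with any $G$ still returns the constant $1$. Thus $\theta_G$ is a group endomorphism of $\UPF$. For bijectivity I would exhibit a two-sided inverse: since $\PrPol$ is a group under composition, $G^{-1}\in\PrPol$, and for every $F\in\UPF$ associativity of composition gives $(F)(\theta_G\theta_{G^{-1}})=(F\circ G)\circ G^{-1}=F\circ id_R=F$, with the symmetric computation giving the other side. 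Hence $\theta_{G^{-1}}$ is inverse to $\theta_G$, and $\theta_G\in\Aut[\UPF]$. Alternatively, since $\UPF$ is finite, injectivity alone would suffice, and injectivity follows at once from the surjectivity of $G$.

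For the second stage, I would show $\theta_{G\circ H}=\theta_G\theta_H$ (product in $\Aut[\UPF]$ read in right-action order) by evaluating both sides on an arbitrary $F\in\UPF$. Associativity of composition gives
\[
(F)\theta_{G\circ H}=F\circ(G\circ H)=(F\circ G)\circ H=((F)\theta_G)\theta_H=(F)(\theta_G\theta_H),
\]
and two automorphisms agreeing on every input are equal, so $\theta_{G\circ H}=\theta_G\theta_H$. This is precisely the homomorphism condition. The main obstacle is conceptual rather than computational: with the paper's right-action notation $(F)\theta_G=F\circ G$, the natural map $\theta$ into $\Aut[\UPF]$ is a homomorphism only when $\Aut[\UPF]$ is given the right-action product; under the opposite (standard left-action) convention the same assignment would instead be an anti-homomorphism. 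Once this convention is fixed, the verification reduces to the three-line display above.
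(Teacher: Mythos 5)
Your proposal is correct and follows essentially the same route as the paper: Lemma~\ref{05} gives the endomorphism property, $G^{-1}$ gives the inverse automorphism (the paper deduces surjectivity of $\theta_G$ from $(F\circ G^{-1})\theta_G=F$ and then invokes finiteness, whereas you exhibit $\theta_{G^{-1}}$ as a two-sided inverse directly --- a negligible difference), and associativity of composition gives $\theta_{G_1\circ G_2}=\theta_{G_1}\circ\theta_{G_2}$. Your remark about the right-action convention correctly identifies the reading under which the paper's displayed computation makes $\theta$ a homomorphism rather than an anti-homomorphism.
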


\begin{proof}In view of
	Lemma~\ref{05}-(2) we need only to show that $\theta_G$ is a bijection.
	Let $F\in \UPF$. Then $F\circ G^{-1}\in \UPF$ by Lemma~\ref{05}, and we have that
	$$(F\circ G^{-1})\theta_G=(F\circ G^{-1})\circ G= F\circ(G^{-1}\circ G)=F\circ id_R=F.$$ 
	This shows that $\theta$ is a surjection, and hence it is a bijection since  $\UPF$ is finite.
	Therefore $\theta_G \in \Aut[\UPF]$.
	
	Moreover, if $\theta\colon \PrPol \longrightarrow \Aut[\UPF]$ is given by $(G)\theta=\theta_G$,
	then for every $G_1,G_2\in \PrPol$ and any $F\in \UPF$, 
	we have
	$$(F)\theta_{G_1\circ G_2}=F\circ(G_1\circ G_2)=(F\circ G_1)\circ G_2 =
	(F\circ G_1)\theta_{G_2}=((F)\theta_{ G_1})\theta_{G_2}=(F)\theta_{ G_1}\circ\theta_{G_2}.$$
	Hence $\theta_{ G_1\circ G_2}= \theta_{ G_1}\circ\theta_{G_2}$ 
	and $\theta$ is a homomorphism.
\end{proof}

\begin{Notation and Remark}\label{07}
	Let $H$ be the set of all pairs $(G,F)$, where $G\in \PrPol$ and $F\in \UPF$.
	We put 
	\[\overline{\PrPol}=\{( G,1_{\PolFun}): G\in \PrPol\}, \text{ and }
	\overline{\UPF}=\{( id_R,F): F\in \UPF\}.\] In the following proposition
	we use the homomorphism $\theta$ of Lemma~\ref{7-aut}  to define 
	a multiplication on $H$. Such a multiplication allows us to view  $H$ as 
	the semidirect of   $\overline{\UPF}$ by $\overline{\PrPol}$.
\end{Notation and Remark}

\begin{proposition}\label{semiconst}
	Assume the above notation. Define a multiplication on $H$ by
	$$(G_1,F_1)(G_2,F_2)=\big( G_1\circ G_2,(F_1)\theta_{G_2 }\cdot F_2 \big)=
	\big(G_1\circ G_2,(F_1\circ G_2)\cdot F_2\big).$$
	Then $H$ is a group containing $\overline{\PrPol},\overline{\UPF}$ as subgroups. 
	Furthermore, the following hold:
	\begin{enumerate}
		\item $H=\overline{\PrPol} \ \overline{\UPF}$;
		\item $\overline{\UPF}\lhd H$;
		\item $\overline{\PrPol} \cap \overline{\UPF}=\{(id_R,1_{\PolFun})\}$.
	\end{enumerate}
	
	That is, $H$ is the (internal) semidirect product of $\overline{\UPF}$ by $\overline{\PrPol}$.
\end{proposition}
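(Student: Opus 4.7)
The plan is to verify the group axioms for $(H,\cdot)$ first, then check that $\overline{\PrPol}$ and $\overline{\UPF}$ are subgroups satisfying the three conditions characterising an internal semidirect product. All calculations will hinge on Lemma~\ref{05}, the automorphism property of $\theta_G$, and the homomorphism property of $\theta$ proved in Lemma~\ref{7-aut}.

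For the group structure, I would first guess the identity and inverse. The identity should be $(id_R,1_{\PolFun})$, because $F\circ id_R=F$ and $1_{\PolFun}\circ G=1_{\PolFun}$ make both one-sided products collapse. For the inverse of $(G,F)$ I would propose $(G^{-1},F^{-1}\circ G^{-1})$, where $F^{-1}$ is the inverse of $F$ in $\UPF$; a direct substitution into the defining formula, using Lemma~\ref{05}(2)--(3), gives $(id_R,1_{\PolFun})$ on both sides. Associativity is the one place a small computation is unavoidable: expanding $\bigl((G_1,F_1)(G_2,F_2)\bigr)(G_3,F_3)$ and $(G_1,F_1)\bigl((G_2,F_2)(G_3,F_3)\bigr)$, both sides have first coordinate $G_1\circ G_2\circ G_3$, and the second coordinates agree because $\theta_{G_2\circ G_3}=\theta_{G_3}\circ\theta_{G_2}$ by Lemma~\ref{7-aut}, together with the distributivity of $\circ G_3$ over pointwise multiplication (Lemma~\ref{05}(2)). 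This is the main technical obstacle, but it is essentially bookkeeping once the homomorphism $\theta$ is available.

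Next I would check that $\overline{\PrPol}$ and $\overline{\UPF}$ are closed under the multiplication and inversion inherited from $H$. For $\overline{\PrPol}$, the product of $(G_1,1_{\PolFun})$ and $(G_2,1_{\PolFun})$ is $(G_1\circ G_2,(1_{\PolFun}\circ G_2)\cdot 1_{\PolFun})=(G_1\circ G_2,1_{\PolFun})$, since $1_{\PolFun}\circ G_2=1_{\PolFun}$. For $\overline{\UPF}$, the product of $(id_R,F_1)$ and $(id_R,F_2)$ is $(id_R,(F_1\circ id_R)\cdot F_2)=(id_R,F_1\cdot F_2)$. Both sets therefore form subgroups isomorphic to $\PrPol$ and $\UPF$ respectively.

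Finally I would establish the three decomposition properties. Property~(3) is immediate from the definitions of $\overline{\PrPol}$ and $\overline{\UPF}$. Property~(1) follows from the identity
\[
(G,1_{\PolFun})(id_R,F)=\bigl(G\circ id_R,(1_{\PolFun}\circ id_R)\cdot F\bigr)=(G,F),
\]
so every $(G,F)\in H$ lies in $\overline{\PrPol}\,\overline{\UPF}$. For property~(2), normality of $\overline{\UPF}$, I would compute the conjugate $(G,F_1)(id_R,F)(G,F_1)^{-1}$ using the explicit inverse from above; the first coordinate reduces to $G\circ id_R\circ G^{-1}=id_R$, while the second coordinate is a product of compositions of elements of $\UPF$ with polynomial permutations, hence remains in $\UPF$ by Lemma~\ref{05}(1). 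Combining these three facts gives the internal semidirect product description.
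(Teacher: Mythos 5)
Your proposal is correct and takes essentially the same route as the paper's own verification: the identity $(id_R,1_{\PolFun})$, the inverse $(G^{-1},F^{-1}\circ G^{-1})$, associativity via the homomorphism property of $\theta$ together with Lemma~\ref{05}(2), closure of $\overline{\PrPol}$ and $\overline{\UPF}$, the factorization $(G,F)=(G,1_{\PolFun})(id_R,F)$, and normality by an explicit conjugation landing in $\overline{\UPF}$ all match the paper's argument. The only cosmetic differences are that you conjugate by an arbitrary $(G,F_1)$ where the paper, using part (1), conjugates only by elements of $\overline{\PrPol}$, and that you check closure under inversion directly where the paper invokes finiteness; neither affects correctness.
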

The proof of Proposition~\ref{semiconst} depends essentially on Lemma~\ref{7-aut}, and it is just the justifications of the 
semidirect product properties (see for example \cite{fgroup,finitegroup2}).  

\begin{remark}\label{007}
	In fact, the subgroups $\overline{\PrPol}$ and $ \overline{\UPF}$ are isomorphic
	to ${\PrPol}$ and $ {\UPF}$ respectively. In this case $H$ is called the (external)
	semidirect product of $\UPF$ by $\PrPol$, and it is denoted by 
	$\PrPol \ltimes_\theta \PolFun^\times$ (see for example \cite{fgroup,finitegroup2}).  
	For this reason,	we simply use $ \PrPol \ltimes_\theta \PolFun^\times$ to mention  
	the semidirect product constructed in Proposition~\ref{semiconst}.
\end{remark}

Our next aim is to show that the group $\mathcal{P}_R(R[\alpha])$ defined in 
Definition~\ref{5} is embedded in    $ \PrPol \ltimes_\theta \PolFun^\times$, i.e.,
$ \PrPol \ltimes_\theta \PolFun^\times$ contains an isomorphic copy of 
$\mathcal{P}_R(R[\alpha])$. To do so let us first prove the following lemma,
which is a   special case of \cite[Theorem 2.8]{haki}.

\begin{lemma}\label{6}
	Let   $g\in R[x]$. Then $g$ permutes $R[\alpha]$ if and only if
	$g$ permutes $R$ and $g'$ is a unit-valued polynomial.
\end{lemma}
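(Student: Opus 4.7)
The plan is to leverage the Taylor-like expansion $g(a+b\alpha)=g(a)+bg'(a)\alpha$ from Fact~\ref{3} together with the fact that $R[\alpha]$ is finite, so a polynomial function on $R[\alpha]$ is a permutation iff it is injective. Both directions then reduce to reading off the $R$-component and the $\alpha$-component of the identity $g(a+b\alpha)=g(a)+bg'(a)\alpha$ separately.

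For the forward direction, assume $g$ permutes $R[\alpha]$. First I would observe that Fact~\ref{3} with $b=0$ gives $g(a)\in R$ for all $a\in R$, so $g$ restricts to a self-map of $R$; injectivity on $R[\alpha]$ transfers to injectivity on the subring $R$, which, since $R$ is finite, upgrades to a permutation of $R$. To show $g'$ is unit-valued, fix $a\in R$ and restrict $g$ to the one-parameter family $\{a+b\alpha:b\in R\}$; by Fact~\ref{3} it maps to $\{g(a)+bg'(a)\alpha:b\in R\}$, and injectivity forces the multiplication-by-$g'(a)$ map on $R$ to be injective. Thus $g'(a)$ is a regular element, hence a unit by Lemma~\ref{reg}.

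For the backward direction, assume $g$ permutes $R$ and that $g'$ is unit-valued. Again by finiteness, it suffices to prove injectivity of $g$ on $R[\alpha]$. If $g(a_1+b_1\alpha)=g(a_2+b_2\alpha)$, then applying Fact~\ref{3} and comparing $R$- and $\alpha$-components yields $g(a_1)=g(a_2)$ and $b_1g'(a_1)=b_2g'(a_2)$. The first equality forces $a_1=a_2$ because $g$ permutes $R$, after which the second equality becomes $(b_1-b_2)g'(a_1)=0$; since $g'(a_1)\in R^\times$, we conclude $b_1=b_2$.

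I do not expect a real obstacle: the proof is essentially bookkeeping around Fact~\ref{3}. The only subtle point is in the forward direction, where one must realize that testing $g$ on the fiber $\{a+b\alpha:b\in R\}$ over $a$ extracts $g'(a)$, and that regularity of $g'(a)$ already suffices by Lemma~\ref{reg} — this is where Lemma~\ref{reg} is indispensable, since it converts the injectivity extracted from the permutation property into the desired invertibility.
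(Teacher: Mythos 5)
Your proof is correct and follows essentially the same route as the paper: both directions reduce to reading off the two components of the expansion $g(a+b\alpha)=g(a)+bg'(a)\alpha$ from Fact~\ref{3}, with finiteness of the ring converting injectivity/regularity into bijectivity/invertibility. The only difference is cosmetic: the paper establishes the $R$-permutation via surjectivity and proves unit-valuedness of $g'$ by contradiction (a zero divisor value of $g'$ kills injectivity), whereas you argue contrapositively via injectivity and Lemma~\ref{reg}; the content is identical.
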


\begin{proof}
	$(\Rightarrow)$
	Let $c\in R$. Then $c\in R[\alpha]$. Since $g$
	permutes  $R[\alpha] $, there exist       
	$a,b \in R$ such that $g(a+b \alpha)= c$.
	Thus $g(a)+ bg'(a)\alpha = c$ by Fact~\ref{3}.
	So $g(a)=c$, and therefore $g$ is onto on the ring $R$, and hence a permutation polynomial on $R$.
	
	Suppose that $g$ is  not a unit-valued polynomial. Then there exists  $a\in R$ such that 
	$g'(a)$ is a zero divisor of $R$.
	For, if $0\ne b\in R$ such that $bg'(a)=0$,
	then by Fact~\ref{3},  $$g(a+b\alpha)=g(a)+bg'(a)\alpha =g(a).$$
	So $g$ does not permute $R[\alpha]$, which is a contradiction.\\
	($\Leftarrow$) It is enough to show that $g$ is injective.
	For, if $a,b,c,d \in R$ such that
	$g(a+b\alpha)=g(c+d\alpha)$, then by Fact~\ref{3},
	$$g(a)+bg'(a)\alpha = g(c)+dg'(c)\alpha.$$
	Then  we have $g(a)= g(c)$ and $bg'(a)= dg'(c)$.
	Hence $a= c$   since $g$  permutes  $R$.
	Then, since $g'(a)$ is a unit of $R$,  $b=d$ follows.               
\end{proof}

\begin{remark}\label{7}
	Let $F\in \mathcal{P}_R(R[\alpha])$. Then there exists $f\in R[x]$ such that $F=[f]_{R[\alpha]}$
	by definition~\ref{5}. So for	every $a,b\in R$ we have, by Fact~\ref{3},
	$$F(a+b\alpha)=f(a+b\alpha)=f(a)+bf'(a)\alpha= [f]_R(a)+b[f']_R(a),$$ 
	where $[f]_R$ and $[f']_R$ denote  the polynomial functions induced  by $f$ and $f'$ on $R$ 
	respectively. This shows that the pair $([f]_R,[f']_R)$ determines	$F=[f]_{R[\alpha]}$ completely. 
	Therefor, if $g\in R[x]$ is another polynomial representing $F$, 
	then $([g]_R,[g']_R)=([f]_R,[f']_R)$.
\end{remark}

\begin{lemma}\label{8}
	Let $f,g\in R[x]$. Then $f\circ g(a+b\alpha)=f(g(a)) +bg'(a)f'(g(a))\alpha$ 
	for every $a,b\in R$. Moreover, if $[f\circ g]_{R[\alpha]}$ the polynomial function
	represented by $f\circ g$ over $R[\alpha]$, then $[f\circ g]_{R[\alpha]}$ is completely
	determined by the pair $([f\circ g]_R,[f'\circ g]_R\cdot [g']_R)$.
\end{lemma}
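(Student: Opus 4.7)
The proof naturally splits into two parts, both resting on Fact~\ref{3}.

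For the first part, the plan is to apply Fact~\ref{3} twice, effectively ``unpacking'' $f\circ g$ via the intermediate value $g(a+b\alpha)$. Given $a,b\in R$, set $c=g(a)\in R$ and $d=bg'(a)\in R$. By Fact~\ref{3}(1) applied to $g$, we have $g(a+b\alpha)=c+d\alpha$. Now applying Fact~\ref{3}(1) again, this time to $f$ evaluated at the point $c+d\alpha\in R[\alpha]$, yields
\[
f(g(a+b\alpha))=f(c+d\alpha)=f(c)+df'(c)\alpha=f(g(a))+bg'(a)f'(g(a))\alpha,
\]
which is the desired identity.

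For the second part, the idea is to rewrite the right-hand side in terms of polynomial functions on $R$ and then invoke the same determinacy principle used in Remark~\ref{7}. Using the notational identities from the earlier remark ($[f\circ g]_R=[f]_R\circ [g]_R$ and $[fg]_R=[f]_R\cdot[g]_R$), the equation above becomes
\[
[f\circ g]_{R[\alpha]}(a+b\alpha)=[f\circ g]_R(a)+b\bigl([f'\circ g]_R(a)\cdot [g']_R(a)\bigr)\alpha,
\]
for all $a,b\in R$. Since every element of $R[\alpha]$ is of the form $a+b\alpha$ with $a,b\in R$, the function $[f\circ g]_{R[\alpha]}$ is fully specified once one knows the two functions $[f\circ g]_R$ and $[f'\circ g]_R\cdot[g']_R$ on $R$; this is exactly the statement that $[f\circ g]_{R[\alpha]}$ is completely determined by the pair $([f\circ g]_R,\ [f'\circ g]_R\cdot [g']_R)$.

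There is no real obstacle here: the only subtlety is conceptual, namely recognizing that the coefficient of $\alpha$ in the expansion is precisely the product $[f'\circ g]_R\cdot[g']_R$, which can be read as a polynomial-function avatar of the chain rule $(f\circ g)'=(f'\circ g)\cdot g'$. This matches the general pattern of Remark~\ref{7} applied to the single polynomial $f\circ g\in R[x]$, ensuring consistency with the earlier framework.
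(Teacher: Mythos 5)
Your proof is correct and takes essentially the same route as the paper: the paper obtains the identity by applying Fact~\ref{3} to $f\circ g$ together with the chain rule, which amounts to your two-step application of Fact~\ref{3} (first to $g$, then to $f$ at the point $g(a)+bg'(a)\alpha$). The second assertion is likewise handled exactly as in the paper, by reading off the pair via Remark~\ref{7} applied to the single polynomial $f\circ g$.
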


\begin{proof}
	The first statement follows by applying Fact~\ref{3} and the chain rule to the polynomial $f\circ g$.
	The last statement follows from the first part and Remark~\ref{7}.      
\end{proof}

Recall from Definition~\ref{5} and Fact ~\ref{2}  the definitions of the groups
$\mathcal{P}_R(R[\alpha])$ and $\UPF$ respectively.
\begin{proposition}\label{9}
	Let $R$ be a finite  commutative ring. The group of  polynomial permutations 
	$\mathcal{P}_R(R[\alpha])$ is embedded in  $ \PrPol \ltimes_\theta \PolFun^\times$, 
	where $\theta$ is the homomorphism defined in Lemma~\ref{7-aut}.
\end{proposition}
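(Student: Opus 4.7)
The plan is to construct an explicit embedding
$\varphi\colon \mathcal{P}_R(R[\alpha]) \longrightarrow \PrPol \ltimes_\theta \PolFun^\times$
by sending $[f]_{R[\alpha]}$ to the pair $\bigl([f]_R,[f']_R\bigr)$, where $f\in R[x]$ is any polynomial representative, and to verify that $\varphi$ is a well-defined injective group homomorphism. Remark~\ref{7} already tells us that the pair $([f]_R,[f']_R)$ depends only on the function $F=[f]_{R[\alpha]}$ and not on the choice of representative, so well-definedness at the level of sets is immediate.

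Before checking the algebraic structure, I would record that the image genuinely lands in the semidirect product. For this I invoke Lemma~\ref{6}: because $f$ represents a permutation of $R[\alpha]$, the reduction $[f]_R$ lies in $\PrPol$ and $f'$ is unit-valued on $R$, so $[f']_R\in \UPF$. Hence $\varphi(F)\in \PrPol\ltimes_\theta \PolFun^\times$ for every $F\in\mathcal{P}_R(R[\alpha])$.

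The core step is the homomorphism property. Given $F_1=[f]_{R[\alpha]}$ and $F_2=[g]_{R[\alpha]}$, composition in $\mathcal{P}_R(R[\alpha])$ corresponds to composition of polynomials, and Lemma~\ref{8} packages the chain rule into the statement that $[f\circ g]_{R[\alpha]}$ is determined by the pair $\bigl([f\circ g]_R,\,[f'\circ g]_R\cdot [g']_R\bigr)$. Rewriting this as $\bigl([f]_R\circ [g]_R,\,([f']_R\circ [g]_R)\cdot [g']_R\bigr)$ and comparing with the multiplication rule from Proposition~\ref{semiconst},
\[
([f]_R,[f']_R)\,([g]_R,[g']_R)=\bigl([f]_R\circ[g]_R,\,([f']_R\circ [g]_R)\cdot [g']_R\bigr),
\]
shows $\varphi(F_1\circ F_2)=\varphi(F_1)\varphi(F_2)$. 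Injectivity is then nearly free: if $\varphi(F_1)=\varphi(F_2)$ then $[f]_R=[g]_R$ and $[f']_R=[g']_R$, and Fact~\ref{3} gives $f(a+b\alpha)=[f]_R(a)+b[f']_R(a)\alpha=[g]_R(a)+b[g']_R(a)\alpha=g(a+b\alpha)$ for all $a,b\in R$, so $F_1=F_2$.

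The only place where care is needed is the homomorphism check, since it is exactly here that the chain rule (hidden inside Lemma~\ref{8}) must mesh with the twist in the semidirect product given by $\theta$. Once that matching is observed, however, both well-definedness and injectivity fall out of Remark~\ref{7} and Fact~\ref{3}, and Lemma~\ref{6} takes care of verifying that the image sits inside $\PrPol\ltimes_\theta\PolFun^\times$. I do not expect any serious obstacle beyond carefully aligning the formula from Lemma~\ref{8} with the multiplication law of Proposition~\ref{semiconst}.
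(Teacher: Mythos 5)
Your proposal is correct and follows essentially the same route as the paper: the same map $F=[f]_{R[\alpha]}\mapsto([f]_R,[f']_R)$, well-definedness and injectivity via Remark~\ref{7} (together with Fact~\ref{3}), membership in the semidirect product via Lemma~\ref{6}, and the homomorphism property by matching Lemma~\ref{8} against the multiplication law of Proposition~\ref{semiconst}. No issues to report.
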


\begin{proof}
	Let $F\in \mathcal{P}_R(R[\alpha])$. Then by Definition~\ref{5}, $F$ is induced
	by a polynomial $f\in R[x]$. Hence $f$ permutes $R$ and $f'$ is a unit-valued 
	polynomial by Lemma~\ref{6}. Thus $[f]_R\in \PrPol[R]$ and $[f']_R\in \UPF$. Now  
	define a map
	\[\phi\colon\mathcal{P}_R(R[\alpha]) \longrightarrow   \PrPol \ltimes_\theta \PolFun^\times \text{ by }
	\phi(F)=([f]_R,[f']_R). \]
	By Remark~\ref{7}, $\phi$ is well defined. 
	Thus, if $F_1\in \mathcal{P}_R(R[\alpha])$ is induced by $f_1\in R[x]$, then we have
	$F\circ F_1$ is determined by the pair
	$([f\circ f_1  ]_R, [f'\circ f_1]_R\cdot [f_1']_R)$ by Lemma~\ref{8}. 
	Therefore, by the operation of $\PrPol \ltimes_\theta \PolFun^\times$ defined in 
	Proposition~\ref{semiconst}, 
	\begin{align*}
	\phi[F\circ F_1]&=( [f\circ f_1]_R,[f'\circ f_1]_R\cdot 
	[f_1']_R)=\big([f]_R\circ [f_1]_R,([f']_R\circ [f_1]_R)\cdot [f_1']_R\big)\\
	&=( [f]_R,[f']_R)([f_1]_R,[f'_1]_R )=\phi(F)\phi(F_1).
	\end{align*}
	Thus   $\phi$ is a homomorphism.
	Now  suppose that $F\ne F_1$. Then by  Remark~\ref{7},
	$( [f]_R, [f']_R)\ne([f_1]_R,[f'_1]_R)$. Hence $\phi$ is injective.
\end{proof}


\section{ The pointwise stabilizer group of $R$ and the group $\UPF$}\label{finitefieldcase}

In this section, we show that the group of unit-valued polynomial functions contains
an isomorphic copy of the pointwise stabilizer  group of $R$ (defined below). 
In particular, when $R=\mathbb{F}_q$ the finite field of $q$ elements, we prove
that $\UPF[\mathbb{F}_q]$ is isomorphic to this group. We employ this result in
the end of this section to prove	that 
$\mathcal{P}_{\mathbb{F}_q}(\mathbb{F}_q[\alpha])\cong 
\PrPol[\mathbb{F}_q]\ltimes_\theta \PolFun[\mathbb{F}_q]^\times$. 

Now we recall the definition of the pointwise stabilizer group of $R$ from \cite{haki}.

\begin{definition}\label{3-1} 
	Let
	$\Stab=\{F\in \mathcal{P}(R[\alpha]):
	F(r)=r \text{ for every } r\in R\}$.
\end{definition}

It is evident that $\Stab[R]$ is closed under composition, and hence it  is a subgroup
of $\mathcal{P}(R[\alpha])$ since	it is a non-empty finite set. We call this group 
the pointwise stabilizer group of $R$. 

Recall from the introduction that the ideal $N_R$ is   consisting of all null polynomials on $R$.

\begin{lemma}\label{3-2}
	Let $g,h\in R[x]$. Suppose that  $[g]_R=[h]_R$.
	Then there exists   $f \in N_R$ such that $g=h+f$.
\end{lemma}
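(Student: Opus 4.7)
The plan is to take $f := g - h$ as the candidate null polynomial and verify the two required properties directly from the definitions. There is essentially no obstacle here; the statement is a tautological consequence of how $N_R$ and $[\cdot]_R$ are defined, and the only work is to unwind the notation carefully.

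First, I would translate the hypothesis $[g]_R = [h]_R$ into pointwise equality of values: for every $r \in R$, we have $g(r) = h(r)$. Next, I would set $f := g - h \in R[x]$ and observe, using the ring homomorphism property of evaluation at a point, that $f(r) = g(r) - h(r) = 0$ for each $r \in R$. By the very definition of $N_R$ given in the introduction (the ideal of polynomials inducing the constant zero function on $R$), this says $f \in N_R$. Finally, rearranging $f = g - h$ yields $g = h + f$ as required.

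The only thing worth flagging is that this lemma does not require $R$ to be finite, nor does it depend on any earlier lemma; it is pure bookkeeping. Its importance lies not in its content but in its role in later arguments, where one needs to pass between a polynomial and any other representative of the same polynomial function modulo the ideal $N_R$. For this reason the proof should be written out in one or two sentences rather than elaborated.
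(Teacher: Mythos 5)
Your proof is correct and is essentially identical to the paper's: both define $f := g-h$, note that $f$ vanishes at every point of $R$ so that $f\in N_R$, and conclude $g=h+f$. Nothing further is needed.
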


\begin{proof}
	Let $f=g-h$. Then $[f]_R$ is the constant zero function on $R$, that is $f \in N_R$. 
\end{proof}

We need the following proposition from~\cite{haki}.  However, we prove it  as
the proof does not depend on extra materials.

\begin{proposition}\cite[Proposition 2.15]{haki}\label{3-3}
	Let $R$ be a finite commutative  ring. Then \[\Stab=\{F\in \mathcal{P}(R[\alpha]):F
	\textnormal{ is induced by } x+g(x), g \in N_R \}.\]
	Moreover, $\Stab$ is a subgroup of $\mathcal{P}_R(R[\alpha])$.
\end{proposition}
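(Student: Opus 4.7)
The plan is to prove the set equality by two inclusions and deduce the subgroup claim from the first part. For $(\supseteq)$, I would take $F \in \mathcal{P}(R[\alpha])$ induced by $x + g(x)$ with $g \in N_R$ and evaluate at $r \in R \subseteq R[\alpha]$: since $g$ is null on $R$, one has $F(r) = r + g(r) = r$, so $F \in \Stab$.

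For the reverse inclusion I would begin with $F \in \Stab$ and any polynomial $h \in R[\alpha][x]$ that induces it. Using Fact~\ref{3}(\ref{3-b}), decompose $h = h_1 + h_2 \alpha$ with $h_1, h_2 \in R[x]$, and specialize the evaluation formula at $b = 0$ to get $h(a) = h_1(a) + h_2(a)\alpha$ for every $a \in R$. The stabilizer condition $F(a) = a$ then forces $h_1(a) = a$ and $h_2(a) = 0$ for every $a \in R$, so both $g := h_1 - x$ and $h_2$ lie in $N_R$; in particular $h_1 = x + g$ with $g \in N_R$. The key move is then to discard the $h_2 \alpha$ summand: by Fact~\ref{4}, $h_2\alpha$ is null on $R[\alpha]$, so $[h]_{R[\alpha]} = [h_1]_{R[\alpha]}$ and $F$ is induced by $x + g \in R[x]$, which is exactly the required form.

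The moreover part is then immediate: every element of $\Stab$ is induced by a polynomial from $R[x]$, hence $\Stab \subseteq \mathcal{P}_R(R[\alpha])$; combined with the observation made just before the proposition that $\Stab$ is already a subgroup of $\mathcal{P}(R[\alpha])$, one concludes that $\Stab$ is a subgroup of the intermediate group $\mathcal{P}_R(R[\alpha])$. The only delicate point in the whole argument is the passage from a representative in $R[\alpha][x]$ to one in $R[x]$, and Fact~\ref{4} is tailored precisely for this reduction, so no real obstacle arises.
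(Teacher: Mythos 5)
Your proposal is correct and follows essentially the same route as the paper: decompose the inducing polynomial $h\in R[\alpha][x]$ as $h_1+h_2\alpha$ via Fact~\ref{3}, use the stabilizer condition on $R$ to see that $h_2\in N_R$ and $h_1\equiv x \pmod{N_R}$, discard $h_2\alpha$ by Fact~\ref{4}, and deduce the subgroup claim from $\Stab\subseteq\mathcal{P}_R(R[\alpha])$ together with both being subgroups of $\mathcal{P}(R[\alpha])$. The only cosmetic difference is that you write $g:=h_1-x\in N_R$ directly where the paper invokes Lemma~\ref{3-2}, which is the same observation.
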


\begin{proof}
	Obviously, 
	\[\Stab[R]\supseteq\{F\in \mathcal{P}(R[\alpha]):F
	\textnormal{ is induced by } x+g(x), g \in N_R \}.\]
	Now  if $F\in \Stab$, then by Definition~\ref{3-1}, $F\in \mathcal{P}(R[\alpha])$
	such that $F(r)=r$ for each $r\in R$; and
	$F$ is induced by a polynomial $h\in R[\alfa][x]$. By Fact~\ref{3}-(\ref{3-b}), $h=h_0+ h_1\alfa$; 
	where $h_0,h_1\in R[x]$, and so
	$r=F(r)=h_0(r)+h_1(r)\alfa$ for every $r\in R$.
	It follows that $h_1(r)=0$ for every $r\in R$, i.e.,
	$h_1$ is  null  on $R$. Hence  $h_1\alpha$ is  null  on $R[\alpha]$ by Fact~\ref{4}. 
	Thus  $[h_0]_{R[\alfa]}=[h_0+h_1\alpha]_{R[\alfa]}=F$, 
	that is, $F$ is induced by $h_0$.
	Also, $h_0\equiv x \mod N_R$, that is $[h_0]_R=id_R$,  and  therefore 
	$h_0(x)=x+f(x)$ for some $f\in N_R$  by Lemma~\ref{3-2}. This shows the other inclusion.
	
	The last statement follows from the previous part and the fact that
	$\Stab[R]$ and $\mathcal{P}_R(R[\alpha])$ are subgroups of $\mathcal{P}(R[\alpha])$.
\end{proof}

\begin{remark}\label{Lagpol}
	Let $\mathbb{F}_q=\{a_0,\ldots,a_{q-1}\}$ be the finite field with $q$ elements. 
	If $F\colon\mathbb{F}_q\longrightarrow \mathbb{F}_q$,  then the polynomial 
	$f(x)=\sum_{i=0}^{q-1} F(a_i)\prod_{\substack{j=0\\ j\ne i}}^{q-1}\frac{x-a_j}{a_i-a_j}\in \mathbb{F}_q[x]$
	represents $F$. Such a polynomial is called the Lagrange's polynomial and this method
	of construction is called Lagrange's interpolation. Therefore every function on
	a finite field is a polynomial function, and hence $|\PolFun[\mathbb{F}_q]|=q^q$. 
	In particular, every permutation (bijection) on $\mathbb{F}_q$ is a polynomial
	permutation, and so $|\PrPol[\mathbb{F}_q]|= q!$. Further, every unit-valued function
	is a unit-valued polynomial function, and thus  $|\UPF[\mathbb{F}_q]|= (q-1)^q$ since 
	$ \mathbb{F}_q^\times =\mathbb{F}_q\setminus\{0\}$. Moreover, it is obvious that
	Lagrange's interpolation assigns to every function on $\mathbb{F}_q$ a unique 
	polynomial of degree at most $q-1$. Hence every  polynomial of degree at most $q-1$
	is  the Lagrange's polynomial of a function on $\mathbb{F}_q$ since the number of
	these polynomials is $q^q$, which is the number of functions on $\mathbb{F}_q$.   
\end{remark}

\begin{lemma}\label{3-5}
	For each pair of functions $(G,F)$ with
	\[G\colon\mathbb{F}_q\longrightarrow \mathbb{F}_q \text{ bijective  and } 
	F\colon\mathbb{F}_q\longrightarrow \mathbb{F}_q\setminus\{0\}\]
	there exists a polynomial  $g\in \mathbb{F}_q[x]$ such that
	$(G,F)=([g]_{\mathbb{F}_q},[g']_{\mathbb{F}_q})$. Furthermore, $g$ is a permutation polynomial on
	$\mathbb{F}_q[\alfa]$, i.e., $[g]_{\mathbb{F}_q[\alfa]}\in  \mathcal{P}_{\mathbb{F}_q}(\mathbb{F}_q[\alpha])$.
\end{lemma}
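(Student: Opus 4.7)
The plan is to construct $g$ in two stages by applying Lagrange interpolation twice. First, by Remark~\ref{Lagpol}, I would pick any polynomial $g_1\in\mathbb{F}_q[x]$ with $[g_1]_{\mathbb{F}_q}=G$. This secures the correct value function, but in general $[g_1']_{\mathbb{F}_q}$ need not equal $F$, so the remaining task is to modify $g_1$ by a null polynomial without disturbing its induced function while simultaneously correcting the derivative's induced function.

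The core observation is that $x^q-x\in N_{\mathbb{F}_q}$ by Fermat's little theorem, and crucially $(x^q-x)'=qx^{q-1}-1=-1$ in $\mathbb{F}_q[x]$. Thus for any $h\in\mathbb{F}_q[x]$ the polynomial
\[
g \;:=\; g_1 + h\cdot(x^q-x)
\]
still induces $G$ on $\mathbb{F}_q$, while the product rule gives
\[
g' \;=\; g_1' + h'(x^q-x) - h, \qquad \text{so} \qquad [g']_{\mathbb{F}_q} \;=\; [g_1']_{\mathbb{F}_q} - [h]_{\mathbb{F}_q}.
\]
Now I would apply Lagrange interpolation a second time to pick $h$ inducing the function $[g_1']_{\mathbb{F}_q}-F$, which forces $[g']_{\mathbb{F}_q}=F$, as required.

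Once the pair $(G,F)=([g]_{\mathbb{F}_q},[g']_{\mathbb{F}_q})$ is in hand, the last claim is immediate from Lemma~\ref{6}: $g$ permutes $\mathbb{F}_q$ because $G$ is a bijection, and $g'$ is a unit-valued polynomial because $F$ takes values in $\mathbb{F}_q\setminus\{0\}=\mathbb{F}_q^{\times}$; hence $g$ permutes $\mathbb{F}_q[\alfa]$, so $[g]_{\mathbb{F}_q[\alfa]}\in\mathcal{P}_{\mathbb{F}_q}(\mathbb{F}_q[\alfa])$. I do not foresee a real obstacle here; the only point deserving attention is the derivative identity $(x^q-x)'=-1$ in characteristic dividing $q$, which is precisely what decouples the prescription of the value function from that of the derivative's value function.
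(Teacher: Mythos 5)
Your construction is correct and is essentially the paper's own proof: the paper sets $g=f_0+(f_0'-f_1)(x^q-x)$ with $[f_0]_{\mathbb{F}_q}=G$ and $[f_1]_{\mathbb{F}_q}=F$, which is exactly your choice of $h$ inducing $[g_1']_{\mathbb{F}_q}-F$, and both arguments rest on the identity $(x^q-x)'=-1$ together with $x^q-x$ being null on $\mathbb{F}_q$, finishing with Lemma~\ref{6}.
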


\begin{proof}
	By Lagrange's interpolation (Remark~\ref{Lagpol}), there exist  two polynomials $f_0,f_1\in \mathbb{F}_q[x]$   
	such that $[f_0]_{\mathbb{F}_q} =G$
	and $[f_1]_{\mathbb{F}_q} =F$. Then set
	\[g(x) = f_0(x) + (f'_0(x) - f_1(x))(x^q-x).\]
	Thus  \[g'(x) =(f''_0(x) - f'_1(x))(x^q-x)+f_1(x),\]
	whence $[g]_{\mathbb{F}_q}=[f_0]_{\mathbb{F}_q}=G$ and 
	$[g']_{\mathbb{F}_q}=[f_1]_{\mathbb{F}_q}=F$ since $(x^q-x)$
	is a null polynomial on $\mathbb{F}_q$.
	The last part follows by Lemma~\ref{6} and Definition~\ref{5}.
\end{proof}

\begin{remark}\label{3-4}
	Let $F,F_1\in \Stab$, with $F\ne F_1$. Then, by Proposition~\ref{3-3},
	$F$ and $F_1$ are induced by $x+f(x)$ and $x+g(x)$ respectively for some $f,g\in N_R$.
	By Definition~\ref{3-1}, $[x+f(x)]_R=[x+g(x)]_R= id_R$. 
	Thus  by Remark~\ref{7}, since $F\ne F_1$, $[1+f']_R\ne[1+g']_R$.
\end{remark}

Throughout for any set $A$ let $|A|$ denote the number of elements in  $A$.

\begin{theorem}\label{3-6}
	The  pointwise stabilizer group of $R$ is embedded in the group of
	unit-valued polynomial functions $\UPF$. In particular, if  $R=\mathbb{F}_q$ is  the finite
	field of $q$ elements, then $\Stab[{\mathbb{F}_q}] \cong \UPF[{\mathbb{F}_q}]$.
\end{theorem}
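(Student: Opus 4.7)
The plan is to build an embedding $\psi\colon \Stab[R]\longrightarrow \UPF$ whose construction is dictated by Proposition~\ref{3-3}: every $F\in\Stab[R]$ is induced by a polynomial of the form $x+g(x)$ with $g\in N_R$, so the only natural invariant besides the identity function on $R$ is its derivative $1+g'(x)$, and I would define
\[
\psi(F)=[1+g'(x)]_R=[(x+g(x))']_R.
\]
Since $F\in\Stab[R]\subseteq\mathcal{P}_R(R[\alpha])$, Lemma~\ref{6} tells me that $(x+g(x))'=1+g'(x)$ is in fact a unit-valued polynomial, so the right-hand side really lives in $\UPF$.

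Next I would check well-definedness: if $x+g_1$ and $x+g_2$ both induce $F$ on $R[\alpha]$, then by Remark~\ref{7} the pairs $([x+g_1]_R,[1+g_1']_R)$ and $([x+g_2]_R,[1+g_2']_R)$ coincide, in particular $[1+g_1']_R=[1+g_2']_R$. For the homomorphism property I would take $F_1,F_2\in\Stab[R]$ induced by $x+g_1$ and $x+g_2$ with $g_1,g_2\in N_R$ and apply Lemma~\ref{8} to $f=x+g_1$, $g=x+g_2$: the second component of the associated pair is $[f'\circ g]_R\cdot[g']_R=[(1+g_1')\circ(x+g_2)]_R\cdot[1+g_2']_R$. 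Because $g_2\in N_R$, the composition $(1+g_1')\circ(x+g_2)$ evaluated at any $r\in R$ equals $(1+g_1')(r)$, so this second component collapses to $[1+g_1']_R\cdot[1+g_2']_R=\psi(F_1)\cdot\psi(F_2)$, and this is by construction $\psi(F_1\circ F_2)$. Injectivity is immediate from Remark~\ref{3-4}: distinct $F\ne F_1$ in $\Stab[R]$ force $[1+f']_R\ne[1+g']_R$, i.e.\ $\psi(F)\ne\psi(F_1)$. This settles the general embedding statement.

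For the special case $R=\mathbb{F}_q$ only surjectivity remains. Given an arbitrary $F\in\UPF[\mathbb{F}_q]$, which by Remark~\ref{Lagpol} is any function $\mathbb{F}_q\to\mathbb{F}_q\setminus\{0\}$, I would apply Lemma~\ref{3-5} with the input pair $(id_{\mathbb{F}_q},F)$. This produces a polynomial $g\in\mathbb{F}_q[x]$ with $[g]_{\mathbb{F}_q}=id_{\mathbb{F}_q}$ and $[g']_{\mathbb{F}_q}=F$, and simultaneously guarantees that $g$ permutes $\mathbb{F}_q[\alpha]$. The first equality says $g$ fixes $\mathbb{F}_q$ pointwise, so $[g]_{\mathbb{F}_q[\alpha]}\in\Stab[\mathbb{F}_q]$; writing $g(x)=x+h(x)$ with $h\in N_{\mathbb{F}_q}$, one then has $\psi([g]_{\mathbb{F}_q[\alpha]})=[1+h']_{\mathbb{F}_q}=[g']_{\mathbb{F}_q}=F$, proving $\psi$ is onto.

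The main obstacle is conceptual rather than computational: it is the verification that $\psi$ respects composition, because a priori the ``chain-rule'' second coordinate from Lemma~\ref{8} contains a composition $[f'\circ g]_R$ that need not simplify. The key observation that rescues it is that members of $\Stab[R]$ come from polynomials that are congruent to $x$ modulo $N_R$, so $[f'\circ g]_R=[f']_R$ on $R$, and the action of $\theta_{[g]_R}$ from Lemma~\ref{7-aut} becomes trivial. This is also what one should expect structurally: $\Stab[R]$ sits inside $\mathcal{P}_R(R[\alpha])$, which by Proposition~\ref{9} embeds in $\PrPol\ltimes_\theta\UPF$, and $\Stab[R]$ is exactly the fibre over $id_R\in\PrPol$, hence projects isomorphically onto a subgroup of $\UPF$; for $\mathbb{F}_q$ Lagrange interpolation then makes that subgroup all of $\UPF$.
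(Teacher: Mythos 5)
Your proposal is correct and follows essentially the same route as the paper: the same map $F\mapsto[1+g']_R$, well-definedness via Remark~\ref{7}, injectivity via Remark~\ref{3-4}, and surjectivity for $\mathbb{F}_q$ via Lemma~\ref{3-5} applied to $(id_{\mathbb{F}_q},F)$. The only (harmless) variation is in the homomorphism check, where you invoke the chain-rule pair of Lemma~\ref{8} and note that composition with $x+g_2$ is the identity on $R$-points, whereas the paper computes $F\circ F_1(a+b\alpha)$ directly and exhibits the explicit composite polynomial $x+f+g+f'g$.
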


\begin{proof}
	Let $F\in \Stab$. Then $F$ is induced by $x+f(x)$ for some $f\in N_R$ by Proposition~\ref{3-3}.
	By Lemma~\ref{6}, $1+f'(x) $ is a unit-valued polynomial, whence $[1+f']_R\in \UPF$. 
	Define a map $\phi\colon \Stab \longrightarrow \UPF$ by $\phi(F)=[1+f']_R$.  
	By Remark~\ref{7}, $\phi$ is well defined;  and it is injective by Remark~\ref{3-4}.
	Now if $F_1\in \Stab$, then $F_1$ is induced by $x+g(x)$ for some $g\in N_R$. 
	By routine calculations and   Fact~\ref{3}, one shows easily that
	\[F\circ F_1(a+b\alpha)=a+b\big(1+f'(a)+g'(a)+f'(a)g'(a)\big)\alpha  \text{ for each } a,b\in R.\]
	Hence $F\circ F_1$ is induced by $x+f(x) +g(x)+f'(x)g(x)$. Since $f''g\in N_R$, $[f''g]_R$ 
	is the zero function on $R$, and whence
	\begin{align*}
	\phi(F\circ F_1) &=[ 1+f'+ g' +f'g'+f''g ]_R=[1+f'+ g' +f'g']_R+[f''g]_R\\
	&=[(1+f')(1+g')]_R
	=[1+f']_R\cdot[1+g']_R=\phi(F)\cdot\phi(F_1).
	\end{align*}
	Thus  $\phi$ is a homomorphism. Therefore $\Stab$ is embedded in $\UPF$.
	
	For the case $R={\mathbb{F}_q}$, we need only to show that $\phi$ is surjective. 
	Let $F\in \UPF[{\mathbb{F}_q}]$. Then there exists $f\in \mathbb{F}_q[x]$ such that 
	$[f]_ {\mathbb{F}_q}=id_{\mathbb{F}_q}$, $[f']_{\mathbb{F}_q}=F$ and 
	$[f]_{{\mathbb{F}_q}[\alpha]}\in \mathcal{P}_{\mathbb{F}_q}(\mathbb{F}_q[\alpha])$
	by Lemma~\ref{3-5}.  Thus $[f]_{{\mathbb{F}_q}[\alpha]}\in \Stab[\mathbb{F}_q]$ 
	by Definition~\ref{3-1}, and hence
	$\phi([f]_{{\mathbb{F}_q}[\alpha]})=[f']_{\mathbb{F}_q}=F$. Therefore $\phi$ is surjective.
\end{proof}

\begin{corollary}\label{3-61}
	$\Stab \cong \UPF$ if and only if  $\PrPol \ltimes_\theta \Stab\cong  \PrPol \ltimes_\theta \UPF$.
\end{corollary}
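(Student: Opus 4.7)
The plan is to reduce the biconditional to a cardinality statement by exploiting the embedding $\phi\colon \Stab \hookrightarrow \UPF$ constructed in Theorem~\ref{3-6}. Since $\phi$ is always injective, the condition $\Stab \cong \UPF$ is equivalent to $|\Stab|=|\UPF|$, which in turn forces $\phi$ itself to be a group isomorphism. The biconditional then splits into two short implications.

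For the direction $(\Rightarrow)$, I would assume $\Stab \cong \UPF$. Then $\phi$ is bijective and is itself an isomorphism. Using $\phi$ to transport the second factor, I would define
\[\Phi\colon \PrPol \ltimes_\theta \Stab \longrightarrow \PrPol \ltimes_\theta \UPF, \qquad \Phi(G,F) = (G,\phi(F)),\]
and verify by direct substitution into the multiplication rule of Proposition~\ref{semiconst} that $\Phi$ is a group homomorphism. Bijectivity of $\Phi$ is immediate from bijectivity of $\phi$, yielding the required isomorphism of semidirect products.

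For the direction $(\Leftarrow)$, I would assume the two semidirect products are isomorphic and read off cardinalities: since the underlying set of $\PrPol \ltimes_\theta H$ is $\PrPol \times H$ for $H \in \{\Stab, \UPF\}$, any isomorphism forces $|\PrPol|\cdot|\Stab| = |\PrPol|\cdot|\UPF|$, so $|\Stab|=|\UPF|$. Combined with the injective map $\phi$ of Theorem~\ref{3-6}, this forces $\phi$ to be a bijection and hence an isomorphism, giving $\Stab \cong \UPF$.

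The step that will need the most care is making precise what $\PrPol \ltimes_\theta \Stab$ even means, since the action $\theta_G(F) = F\circ G$ is defined on $\UPF$ and one must check that it restricts to the image $\phi(\Stab)$ of $\Stab$ inside $\UPF$. In the forward direction this restriction is automatic, since $\phi(\Stab) = \UPF$ whenever $\Stab \cong \UPF$; in the reverse direction no compatibility check is needed, as the argument is a pure order count.
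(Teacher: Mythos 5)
Your proposal is correct and follows essentially the same route as the paper: the reverse direction is the same cardinality count combined with the embedding $\phi$ of Theorem~\ref{3-6}, and the forward direction merely fills in the transport isomorphism $(G,F)\mapsto(G,\phi(F))$ that the paper dismisses as obvious. Your explicit remark about interpreting $\PrPol \ltimes_\theta \Stab$ via the ($\theta$-invariant, indeed full) image $\phi(\Stab)$ is a sound clarification rather than a deviation.
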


\begin{proof}
	$(\Rightarrow)$ Obvious.
	
	$(\Leftarrow)$ Assume that         $\PrPol \ltimes_\theta \Stab\cong  \PrPol \ltimes_\theta \UPF$. 
	Then $|\Stab|= |\UPF|$. But, $\UPF$ contains a subgroup isomorphic to $\Stab$ 
	by Theorem~\ref{3-6}, and hence  $\Stab \cong \UPF$.
\end{proof}

In Proposition~\ref{9} we have proved for any finite ring $R$ that 
the group $\mathcal{P}_R(R[\alpha])$ is embedded in 
$\PrPol \ltimes_\theta \UPF$.  In the following theorem we show that, 
for a finite field $\mathbb{F}_q$,
\[\mathcal{P}_{\mathbb{F}_q}(\mathbb{F}_q[\alpha])\cong
\PrPol[\mathbb{F}_q] \ltimes_\theta \PolFun[\mathbb{F}_q]^\times.\]

\begin{theorem}\label{3-7}
	Let $\mathbb{F}_q$ be the finite field of $q$ elements, and let  $\theta$ be
	the homomorphism defined in Lemma~\ref{7-aut}. Then 
	$$\mathcal{P}_{\mathbb{F}_q}(\mathbb{F}_q[\alpha])\cong \PrPol[\mathbb{F}_q] \ltimes_\theta \PolFun[\mathbb{F}_q]^\times \cong \PrPol[\mathbb{F}_q]\ltimes_\theta \Stab[\mathbb{F}_q].$$
\end{theorem}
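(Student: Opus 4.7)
The plan is to combine Proposition~\ref{9} with the constructive Lemma~\ref{3-5} to obtain the first isomorphism, and then to invoke Theorem~\ref{3-6} (equivalently, the straightforward direction of Corollary~\ref{3-61}) to obtain the second.

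For the first isomorphism, I would recall from the proof of Proposition~\ref{9} that the map
\[
\phi\colon \mathcal{P}_{\mathbb{F}_q}(\mathbb{F}_q[\alpha]) \longrightarrow \PrPol[\mathbb{F}_q]\ltimes_\theta \PolFun[\mathbb{F}_q]^\times, \qquad [f]_{\mathbb{F}_q[\alpha]}\longmapsto ([f]_{\mathbb{F}_q},[f']_{\mathbb{F}_q})
\]
is already an injective group homomorphism, valid over any finite commutative ring. What is special about $R=\mathbb{F}_q$ is that $\phi$ now hits every pair in its codomain: given $(G,F)\in \PrPol[\mathbb{F}_q]\times \PolFun[\mathbb{F}_q]^\times$, the function $G$ is a bijection on $\mathbb{F}_q$ and $F$ takes values in $\mathbb{F}_q\setminus\{0\}$, so Lemma~\ref{3-5} produces a single polynomial $g\in\mathbb{F}_q[x]$ satisfying $[g]_{\mathbb{F}_q}=G$, $[g']_{\mathbb{F}_q}=F$ and $[g]_{\mathbb{F}_q[\alpha]}\in \mathcal{P}_{\mathbb{F}_q}(\mathbb{F}_q[\alpha])$. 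Then $\phi([g]_{\mathbb{F}_q[\alpha]})=(G,F)$, so $\phi$ is surjective and hence an isomorphism.

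For the second isomorphism, Theorem~\ref{3-6} gives $\Stab[\mathbb{F}_q]\cong \PolFun[\mathbb{F}_q]^\times$. Transporting the action $\theta$ of $\PrPol[\mathbb{F}_q]$ on $\PolFun[\mathbb{F}_q]^\times$ along this isomorphism produces a compatible action on $\Stab[\mathbb{F}_q]$, and the resulting semidirect products are isomorphic by the easy ``$(\Rightarrow)$'' direction of Corollary~\ref{3-61}. Chaining the two isomorphisms yields the three-way chain claimed in the theorem.

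The main obstacle sits entirely inside Lemma~\ref{3-5}: producing a single polynomial whose value function realizes a prescribed permutation $G$ of $\mathbb{F}_q$ \emph{and} whose formal derivative realizes an arbitrary prescribed unit-valued function $F$ is precisely what upgrades the embedding of Proposition~\ref{9} to an isomorphism in the finite-field setting. Once that lemma is in hand, the theorem is just a short assembly of previously established pieces.
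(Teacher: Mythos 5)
Your proposal is correct and follows essentially the same route as the paper: both arguments rest on the embedding $\phi$ of Proposition~\ref{9} together with Lemma~\ref{3-5}, and on Theorem~\ref{3-6} (via Corollary~\ref{3-61}) for the second isomorphism. The only difference is presentational: the paper establishes surjectivity indirectly by counting, showing $|\mathcal{P}_{\mathbb{F}_q}(\mathbb{F}_q[\alpha])|\ge q!(q-1)^q=|\PrPol[\mathbb{F}_q]\ltimes_\theta \PolFun[\mathbb{F}_q]^\times|$ and letting finiteness force equality, whereas you use Lemma~\ref{3-5} to exhibit a preimage of an arbitrary pair $(G,F)$ directly, which is an equally valid (arguably cleaner) way to finish.
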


\begin{proof}
	In view of Proposition~\ref{9} and Theorem~\ref{3-6} we need only to show that
	\[|\mathcal{P}_{\mathbb{F}_q}(\mathbb{F}_q[\alpha])|\ge| \PolFun[\mathbb{F}_q]^\times ||\PrPol[\mathbb{F}_q]|.\]

	Hence, by Remark~\ref{Lagpol},  it is sufficient to show that 
	$|\mathcal{P}_{\mathbb{F}_q}(\mathbb{F}_q[\alpha])|\ge q!(q-1)^q$.
	
	Now consider the pair of functions $(G,F)$ with
	\[G\colon\mathbb{F}_q\longrightarrow \mathbb{F}_q \text{ bijective  and }
	F\colon\mathbb{F}_q\longrightarrow \mathbb{F}_q\setminus\{0\}.\]  
	It is obvious that the total number of different pairs of these form is $q!(q-1)^q$. 
	Moreover, by Lemma~\ref{3-5},  there exists $g\in  \mathbb{F}_q[x]$  such that
	$(G,F)=([g]_{\mathbb{F}_q},[g']_{\mathbb{F}_q})$ and 
	$[g]_{\mathbb{F}_q[\alfa]}\in  \mathcal{P}_{\mathbb{F}_q}(\mathbb{F}_q[\alpha])$. 
	By Remark~\ref{7}, every two different pairs of functions of the form of Lemma~\ref{3-5}
	determine two different elements of   $\mathcal{P}_{\mathbb{F}_q}(\mathbb{F}_q[\alpha])$. 
	Therefore $|\mathcal{P}_{\mathbb{F}_q}(\mathbb{F}_q[\alpha])|\ge q!(q-1)^q $.
\end{proof}
We conclude this section with the following remark.
\begin{remark}
	It should be mentioned that, when $q=p$, where $p$ is a prime number, Frisch and Kren~\cite{per2} used
	the Sylow $p$-groups
	of $\mathcal{P}_{\mathbb{F}_p}(\mathbb{F}_p[\alpha])\cong \PrPol[\mathbb{F}_p] \ltimes_\theta \PolFun[\mathbb{F}_p]^\times $ to determine the number of the Sylow $p$-groups
	of $\PrPol[\mathbb{Z}_{p^n}]$ 
	for $n\ge 2$.
\end{remark}

\section{The number of unit-valued polynomial functions on the ring $\mathbb{Z}_{p^n}$}\label{the num}

Throughout this section let $p$ be a prime number and $n$ be a positive integer.
In the literature authors considered the number of polynomial functions and 
polynomial permutations on  the ring of integers modulo $p^n$, however they overlooked
counting  the number of unit-valued polynomial functions modulo $p^n$
(see for example,~\cite{pol1,pol2}). In this section we apply the results of~\cite{pol1}
to derive an explicit formula for the order of the group $\UPF[{\mathbb{Z}_{p^n}}]$,
i.e., the number of unit-valued polynomial functions $\pmod {p^n}$. In addition to that,
we find canonical representations for these functions.

It is well known that the ring of integers $\mathbb{Z}$ can be viewed as a set
of representatives of the elements of $\mathbb{Z}_{p^n}$. This allows us to represent
the polynomial functions on $\mathbb{Z}_{p^n}$ by polynomials from $\mathbb{Z}[x]$. 
To simplify our notation we use the symbol $[f]_{p^n}$ instead of
$[f]_{\mathbb{Z}_{p^n}}$ to indicate the function induced by
$f\in \mathbb{Z}[x]$ on $\mathbb{Z}_{p^n}$.

\begin{lemma}\label{6-1}
	Let $n\ge 2$ and let $f\in \mathbb{Z}[x]$. Then $[f]_{p^n}\in \UPF[{\mathbb{Z}_{p^n}}]$ 
	if and only if $[f]_{p^{n-1}}\in \UPF[{\mathbb{Z}_{p^{n-1}}}]$ if and only if
	$[f]_{p}\in \UPF[{\mathbb{Z}_{p}}]$.
\end{lemma}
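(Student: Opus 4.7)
The plan is to reduce all three conditions to the single divisibility condition $p \nmid f(a)$ for every integer $a$, which is manifestly independent of the modulus $p^k$.

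First I would recall the basic description of units in $\mathbb{Z}_{p^k}$: an element $a\in\mathbb{Z}_{p^k}$ is a unit if and only if $\gcd(a,p^k)=1$, which holds if and only if $p\nmid a$. Consequently, for any $k\ge 1$, membership of $[f]_{p^k}$ in $\UPF[\mathbb{Z}_{p^k}]$ is equivalent to the statement that $p\nmid f(a)$ for every $a\in\mathbb{Z}$ (using $\mathbb{Z}$ as a set of representatives, as noted in the paragraph preceding the lemma).

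Next, I would observe that the condition $p\nmid f(a)$ depends only on the residue class of $a$ modulo $p$, since $f$ has integer coefficients. Hence, whether we quantify $a$ over a full set of representatives modulo $p^n$, modulo $p^{n-1}$, or modulo $p$, we are testing $f$ against the same $p$ residue classes mod $p$. In particular the condition holds simultaneously for all three moduli, which yields the chain of equivalences
\[
[f]_{p^n}\in\UPF[\mathbb{Z}_{p^n}]\ \Longleftrightarrow\ [f]_{p^{n-1}}\in\UPF[\mathbb{Z}_{p^{n-1}}]\ \Longleftrightarrow\ [f]_{p}\in\UPF[\mathbb{Z}_{p}].
\]

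There is no real obstacle here; the only thing to be careful about is articulating that unit-valuedness modulo a prime power is really a condition modulo $p$, and that the set of integer representatives projects onto $\mathbb{Z}_p$ surjectively in each case. The proof is essentially a one-line unfolding of definitions once the unit characterization in $\mathbb{Z}_{p^k}$ is invoked.
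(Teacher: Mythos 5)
Your proof is correct and follows essentially the same route as the paper: both reduce unit-valuedness modulo $p^k$ to the modulus-independent condition $\gcd(p^k, f(a)) = 1$, i.e.\ $p \nmid f(a)$ for every integer representative $a$, and then note that this single condition settles all three memberships at once. No gaps.
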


\begin{proof}
	Let $f\in \mathbb{Z}[x]$ and let $k\ge 1$. Then $[f]_{p^k}\in \UPF[{\mathbb{Z}_{p^k}}]$
	if and only if $f(r)\in \mathbb{Z}_{p^k}^{\times}$ for every $r\in \mathbb{Z}_{p^k}$
	(by Definition~\ref{1}) if and only if $\gcd(p^k, f(r))=1$ 
	for every $r\in \mathbb{Z}$ if and only if $f(r)\not\equiv 0 \pmod{p}$ 
	for each $r\in \mathbb{Z}$ if and only if $[f]_p\in\UPF[\mathbb{Z}_p]$.
\end{proof}

\begin{remark}\label{6-2}
	Let $n>1$. Define a map 
	$\phi_n\colon\PolFun[\mathbb{Z}_{p^n}]\longrightarrow \PolFun[\mathbb{Z}_{p^{n-1}}]$ 
	by  $\phi_n(F)=[f]_{p^{n-1}}$, where $f\in \mathbb{Z}[x]$  such that $F=[f]_{p^n}$.
	Now  if  $f,g\in \mathbb{Z}[x]$ such that $[g]_{p^n}=[f]_{p^n}$, then
	$[g-f]_{p^n}$ is the zero function; that is, $g-f$ is a null polynomial on $\mathbb{Z}_{p^n}$.
	Thus $g-f$ is a null polynomial on $\mathbb{Z}_{p^{n-1}}$, whence  $[g]_{p^{n-1}}=[f]_{p^{n-1}}$.
	This shows that the map $\phi_n$  is well defined. Evidently,  $\phi_n$ is an additive group
	epimorphism  with	$|\PolFun[\mathbb{Z}_{p^n}]|=|\PolFun[\mathbb{Z}_{p^{n-1}}]||\ker\phi_n|$. 
	Moreover, if $n>2$ and $f\in \mathbb{Z}[x]$ such that $[f]_{p^n}\in \ker \phi_n$, 
	then it is  obvious that $[f]_{p^{n-1}}\in\ker \phi_{n-1}$; hence
	$[f]_{p^k}\in \ker \phi_k$ for $2\le k<n$. But not vice versa, for example, 
	if $f(x)=p^3+p^4x$, $[f]_{p^4}\in \ker \phi_4$ since $\phi_4([f]_{p^4})=[f]_{p^3}=[0]_{p^3}$
	is the zero function on $\mathbb{Z}_{p^3}$. However, 
	$\phi_5([f]_{p^5})=[f]_{p^4}=[p^3]_{p^4}$; i.e, the constant function which sends every
	element of $\mathbb{Z}_{p^4}$ to $p^3\ne 0$, that is $[f]_{p^5}\notin \ker \phi_5$.
\end{remark}

\begin{notation}\label{nota}
	In the rest of the paper  let  $\beta(n)$ denote the smallest positive integer $k$
	such that $p^n\mid k!$, while $v_p(n)$ denotes the largest integer $s$ such that $p^s\mid n$.
	
	Let $(x)_0=1$, and let  $(x)_j=x(x-1)(x-2)\cdots(x-j+1)$ for any positive integer $j$.       
\end{notation}

The following lemma from \cite{pol1} gives the cardinality of $\ker \phi_n$ of 
the epimorphism $\phi_n$ mentioned in Remark~\ref{6-2}.

\begin{lemma}\cite[Theorem 2]{pol1}\label{6-3}
	Let $n>1$. Every element in $\ker\phi_n$ can be  represented  by a unique polynomial of 
	the form $\sum_{i+v_p(j!)=n-1}a_{ij} p^i(x)_j$ where $i,j\ge 0$ and $0\le a_{ij}\le p-1$.
	Moreover, $|\ker \phi_n|=p^{\beta(n)}$.
\end{lemma}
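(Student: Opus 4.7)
The plan is to work in the falling-factorial basis of $\mathbb{Z}[x]$ and exploit the classical description of null polynomials on $\mathbb{Z}_{p^m}$ in that basis. Specifically, for every $m\ge 1$, the polynomials $p^{\,m-v_p(j!)}(x)_j$ with $v_p(j!)<m$, together with $(x)_j$ for $j\ge\beta(m)$, generate the null ideal $N_{\mathbb{Z}_{p^m}}$; consequently every class in $\PolFun[\mathbb{Z}_{p^m}]$ admits a unique representative of the form $\sum_{j=0}^{\beta(m)-1} b_j(x)_j$ with $0\le b_j<p^{\,m-v_p(j!)}$. I would import this canonical form (the heart of the earlier results in \cite{pol1}) as the starting point, for both $m=n$ and $m=n-1$.

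Next I would characterize $\ker\phi_n$ in these coordinates. Writing the $\mathbb{Z}_{p^n}$-canonical representative of $[f]_{p^n}$ as $\sum_{j=0}^{\beta(n)-1} b_j(x)_j$, membership in $\ker\phi_n$ says that the same polynomial is null on $\mathbb{Z}_{p^{n-1}}$. Applying the canonical form modulo $p^{n-1}$ term by term, this is equivalent to $p^{\,(n-1)-v_p(j!)}\mid b_j$ for each $j$ with $v_p(j!)\le n-1$ (no new constraint is imposed when $v_p(j!)\ge n$, since then $b_j=0$ already by the $\mathbb{Z}_{p^n}$-canonical form, and when $v_p(j!)=n-1$ the divisibility is vacuous). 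Setting $i=(n-1)-v_p(j!)\ge 0$ and $b_j=a_{ij}p^i$, the range $0\le b_j<p^{\,n-v_p(j!)}$ becomes $0\le a_{ij}\le p-1$, and the condition on $(i,j)$ is precisely $i+v_p(j!)=n-1$. This yields the stated unique representation.

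Counting is then immediate: there is exactly one free digit $a_{ij}\in\{0,\ldots,p-1\}$ for each $j$ with $v_p(j!)\le n-1$, and by the definition of $\beta(n)$ as the smallest index with $v_p(j!)\ge n$ there are exactly $\beta(n)$ such indices, whence $|\ker\phi_n|=p^{\beta(n)}$.

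The main obstacle is the uniqueness half of the canonical representation, i.e., showing that no two distinct tuples $(a_{ij})$ produce the same polynomial function on $\mathbb{Z}_{p^n}$. This reduces to proving that the polynomials $p^{\,n-v_p(j!)}(x)_j$ with $0\le j<\beta(n)$, together with $(x)_j$ for $j\ge\beta(n)$, already exhaust $N_{\mathbb{Z}_{p^n}}$. One can establish this by induction on $n$, using the integer-valuedness of $\binom{x}{j}=(x)_j/j!$ to see that each listed polynomial is indeed null, and then matching cardinalities via Kempner's formula $v_p(j!)=(j-s_p(j))/(p-1)$; the full argument is carried out in \cite{pol1} and may simply be invoked here.
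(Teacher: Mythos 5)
Your proposal is correct, but there is little to compare it against: the paper does not prove Lemma~\ref{6-3} at all --- it imports the statement verbatim as \cite[Theorem 2]{pol1}. What you have written is an actual derivation of it from the canonical-form theorem \cite[Theorem 1]{pol1}, which the paper does record as Lemma~\ref{simplifyproof}. Your reduction is sound: writing the $\mathbb{Z}_{p^n}$-canonical representative as $\sum_{j<\beta(n)} b_j(x)_j$ with $0\le b_j<p^{\,n-v_p(j!)}$, the condition of lying in $\ker\phi_n$ (being null on $\mathbb{Z}_{p^{n-1}}$) translates, via the canonical form one level down, into $p^{\,(n-1)-v_p(j!)}\mid b_j$; the substitution $b_j=a_{ij}p^{i}$ with $i=(n-1)-v_p(j!)$ then yields exactly the displayed family, uniqueness is inherited from Lemma~\ref{simplifyproof} because your representatives form a sub-family of the canonical ones, and the count $p^{\beta(n)}$ follows since $\{j:v_p(j!)\le n-1\}=\{0,\dots,\beta(n)-1\}$ has exactly $\beta(n)$ elements. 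The one point worth making explicit is that the ``only if'' half of your term-by-term criterion (nullity of $\sum_j b_j(x)_j$ on $\mathbb{Z}_{p^{n-1}}$ forces $p^{\,(n-1)-v_p(j!)}\mid b_j$) is not a separate fact but precisely the uniqueness assertion of \cite[Theorem 1]{pol1} applied with $n-1$ in place of $n$; you correctly identify this as the real content and defer it to \cite{pol1}, which is what the paper does wholesale. Your route thus buys a self-contained bridge from Lemma~\ref{simplifyproof} to Lemma~\ref{6-3}, at the cost of no new ideas beyond the cited source --- a reasonable trade either way.
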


Similar to \cite[Corollary 2.1]{pol1} we have for unit-valued polynomial functions modulo ${p^n}$.

\begin{lemma}\label{6-4}
	Let $n>1$. Then $|\UPF[{\mathbb{Z}_{p^n}}]|=p^{\beta(n)}|\UPF[{\mathbb{Z}_{p^{n-1}}}]|$.
\end{lemma}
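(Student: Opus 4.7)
The plan is to reuse the map $\phi_n$ of Remark~\ref{6-2}, restrict it to the group of units $\UPF[\mathbb{Z}_{p^n}]$ of the pointwise ring $\PolFun[\mathbb{Z}_{p^n}]$, and identify the kernel of this restriction with $\ker \phi_n$ by the affine shift $g\mapsto 1+g$.

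First, I would check that restricting $\phi_n$ to $\UPF[\mathbb{Z}_{p^n}]$ yields a map into $\UPF[\mathbb{Z}_{p^{n-1}}]$: this is exactly Lemma~\ref{6-1}. Call this restriction $\psi_n$. Although $\phi_n$ was presented as an additive epimorphism, $\psi_n$ is naturally a homomorphism of the multiplicative groups $\UPF$, since $\phi_n([fg]_{p^n})=[fg]_{p^{n-1}}=[f]_{p^{n-1}}\cdot[g]_{p^{n-1}}$. Surjectivity of $\psi_n$ is immediate from Lemma~\ref{6-1}: given $[g]_{p^{n-1}}\in \UPF[\mathbb{Z}_{p^{n-1}}]$, lift $g\in\mathbb{Z}[x]$ and observe $[g]_{p^n}\in\UPF[\mathbb{Z}_{p^n}]$ with $\psi_n([g]_{p^n})=[g]_{p^{n-1}}$.

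The main step is counting $\ker\psi_n$, i.e., the unit-valued functions on $\mathbb{Z}_{p^n}$ that reduce to the constant $1$ function on $\mathbb{Z}_{p^{n-1}}$. I would exhibit a bijection
\[
\Psi\colon \ker\phi_n \longrightarrow \ker\psi_n,\qquad [g]_{p^n}\longmapsto [1+g]_{p^n}.
\]
For well-definedness: if $[g]_{p^{n-1}}$ is the zero function, then $[1+g]_{p^{n-1}}=[1]_{p^{n-1}}$, and for every $r\in\mathbb{Z}$ we have $(1+g)(r)\equiv 1\pmod{p^{n-1}}$, hence $(1+g)(r)\equiv 1\pmod p$ since $n-1\ge 1$, so $(1+g)(r)\in \mathbb{Z}_{p^n}^\times$ and $[1+g]_{p^n}\in \UPF[\mathbb{Z}_{p^n}]$. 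The inverse $[f]_{p^n}\mapsto[f-1]_{p^n}$ is equally clear, so $\Psi$ is a bijection and $|\ker\psi_n|=|\ker\phi_n|=p^{\beta(n)}$ by Lemma~\ref{6-3}.

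Finally, Lagrange's theorem applied to $\psi_n$ yields
\[
|\UPF[\mathbb{Z}_{p^n}]|=|\ker\psi_n|\cdot|\UPF[\mathbb{Z}_{p^{n-1}}]|=p^{\beta(n)}|\UPF[\mathbb{Z}_{p^{n-1}}]|.
\]
The only subtle point is the mixed additive/multiplicative bookkeeping: the domain of $\phi_n$ is an additive group (in Remark~\ref{6-2}) while the target $\UPF$ is a multiplicative group, so I must emphasize that the restriction $\psi_n$ is a homomorphism for the multiplicative structure and that its kernel is correctly shifted to the additive kernel via $g\mapsto 1+g$. All other ingredients are already developed in the preceding lemmas.
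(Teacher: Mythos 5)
Your proof is correct, and it rests on the same two pillars as the paper's (Lemma~\ref{6-1} to relate unit-valuedness at levels $n$ and $n-1$, and Lemma~\ref{6-3} for $|\ker\phi_n|=p^{\beta(n)}$), but the packaging is genuinely different. The paper never uses any group structure on $\UPF[\mathbb{Z}_{p^n}]$: it simply observes that Lemma~\ref{6-1} says $\phi_n^{-1}\bigl(\UPF[\mathbb{Z}_{p^{n-1}}]\bigr)=\UPF[\mathbb{Z}_{p^n}]$, and since $\phi_n$ is an additive epimorphism every fiber is a coset of $\ker\phi_n$ and hence has exactly $|\ker\phi_n|$ elements; the count is then immediate. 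You instead restrict $\phi_n$ to a surjective homomorphism $\psi_n$ of the multiplicative unit groups and apply Lagrange, which forces you to do one extra piece of work the paper avoids: identifying the multiplicative kernel $\ker\psi_n$ with the additive kernel $\ker\phi_n$ via the shift $g\mapsto 1+g$. That step is carried out correctly (the key point, that $(1+g)(r)\equiv 1\pmod{p^{n-1}}$ forces $(1+g)(r)\not\equiv 0\pmod p$ because $n-1\ge 1$, is exactly right), so nothing is missing; your route just costs a little more and in exchange makes the multiplicative group structure of the reduction map explicit, which the paper's fiber-counting argument leaves implicit.
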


\begin{proof}
	Consider the epimorphism $\phi_n$ given in Remark~\ref{6-2}. We have, by Lemma~\ref{6-1},\\
	$\phi_n^{-1}(\UPF[{\mathbb{Z}_{p^{n-1}}}])=\UPF[{\mathbb{Z}_{p^n}}]$. Hence
	$|\UPF[{\mathbb{Z}_{p^n}}]|=|\phi_n^{-1}(\UPF[{\mathbb{Z}_{p^{n-1}}}])|$. Now  if
	$F\in \UPF[{\mathbb{Z}_{p^{n-1}}}]$ then,  by Lemma~\ref{6-3},
	$|\phi_n^{-1}(F)|=|\ker \phi_n|$. Therefore
	$$|\UPF[{\mathbb{Z}_{p^n}}]|=|\phi_n^{-1}(\UPF[{\mathbb{Z}_{p^{n-1}}}])|=
	|\ker \phi_n||\UPF[{\mathbb{Z}_{p^{n-1}}}]|.$$ 
	The result now follows from Lemma~\ref{6-3}.
\end{proof}

Keep the notations of Fact~\ref{2} and  Notation~\ref{nota}. We now state our 
counting formula for the order of $\UPF[{\mathbb{Z}_{p^{n-1}}}]$.

\begin{theorem}\label{6-5}
	Let $n> 1$ and let $\UPF[{\mathbb{Z}_{p^n}}]$ be the group of unit-valued polynomial functions modulo $p^n$. 
	Then  $|\UPF[{\mathbb{Z}_{p^n}}]|=(p-1)^p{p^{\sum_{k=2}^{n}\beta (k)}}$.
\end{theorem}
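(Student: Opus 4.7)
The plan is to combine Lemma~\ref{6-4} with a base-case computation via Lagrange interpolation and finish by an easy induction on $n$.

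First I would handle the base case $n=1$. Since $\mathbb{Z}_p = \mathbb{F}_p$ is a field, Remark~\ref{Lagpol} applies: every function on $\mathbb{F}_p$ is a polynomial function, so every unit-valued function on $\mathbb{Z}_p$ is a unit-valued polynomial function. The number of such functions is the number of maps from $\mathbb{Z}_p$ into $\mathbb{Z}_p^\times = \mathbb{Z}_p \setminus \{0\}$, which is $(p-1)^p$. Hence $|\UPF[\mathbb{Z}_p]| = (p-1)^p$.

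Next, for $n \geq 2$, I would iterate Lemma~\ref{6-4}. That lemma gives the one-step recursion $|\UPF[\mathbb{Z}_{p^n}]| = p^{\beta(n)} |\UPF[\mathbb{Z}_{p^{n-1}}]|$. Applying it repeatedly down to $n=1$ yields
\[
|\UPF[\mathbb{Z}_{p^n}]| = p^{\beta(n)} p^{\beta(n-1)} \cdots p^{\beta(2)} \cdot |\UPF[\mathbb{Z}_p]| = p^{\sum_{k=2}^{n} \beta(k)} \cdot (p-1)^p,
\]
which is exactly the claimed formula. Formally this is a straightforward induction: assuming the formula at stage $n-1$, apply Lemma~\ref{6-4} and absorb the extra factor $p^{\beta(n)}$ into the exponent.

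There is no real obstacle here; the substance of the argument lives in Lemma~\ref{6-3} (counting $\ker \phi_n$) and Lemma~\ref{6-1} (showing unit-valuedness is detected modulo $p$), both already established. The only thing to watch is that the sum in the exponent starts at $k=2$ — consistent with the convention that at the base case $n=1$ the product $\prod_{k=2}^{1}$ is empty — so the $(p-1)^p$ factor accounts precisely for the $n=1$ contribution and the $p^{\beta(k)}$ factors account for the successive lifts from $\mathbb{Z}_{p^{k-1}}$ to $\mathbb{Z}_{p^k}$ for $k=2,\ldots,n$.
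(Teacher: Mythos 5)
Your proposal is correct and follows exactly the same route as the paper: iterate Lemma~\ref{6-4} down to the base case $n=1$, where $|\UPF[\mathbb{Z}_p]|=(p-1)^p$ by Remark~\ref{Lagpol}. Nothing is missing.
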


\begin{proof}
	By applying Lemma~\ref{6-4}, $n-1$ times, we have that
	$|\UPF[{\mathbb{Z}_{p^n}}]|=|\UPF[{\mathbb{Z}_{p}}]|{p^{\sum_{k=2}^{n}\beta (k)}}$.
	
	But  $|\UPF[{\mathbb{Z}_{p}}]|=(p-1)^p$  by Remark~\ref{Lagpol}.
\end{proof}

\begin{lemma}\label{6-8}
	Let $1\le n\le k$ and let $f,g\in \mathbb{Z}[x]$. If $[f]_{p^n}\ne [g]_{p^n}$
	then $[f]_{p^k}\ne [g]_{p^k}$.
\end{lemma}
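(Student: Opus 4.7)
The plan is to prove the contrapositive: assume $[f]_{p^k}=[g]_{p^k}$ and deduce $[f]_{p^n}=[g]_{p^n}$. The only arithmetic ingredient needed is that $p^n \mid p^k$ whenever $n\le k$.

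First, I would unpack the hypothesis using integer representatives (which is the convention fixed at the start of Section~\ref{the num}). Saying $[f]_{p^k}=[g]_{p^k}$ means $f(r)\equiv g(r)\pmod{p^k}$ for every $r\in\mathbb{Z}$; equivalently, $f-g\in N_{\mathbb{Z}_{p^k}}$. Since $p^n$ divides $p^k$, every integer of the form $f(r)-g(r)$ which is a multiple of $p^k$ is also a multiple of $p^n$, so $f(r)\equiv g(r)\pmod{p^n}$ for every $r\in\mathbb{Z}$, i.e., $f-g\in N_{\mathbb{Z}_{p^n}}$. This gives $[f]_{p^n}=[g]_{p^n}$, and contraposing yields the claim.

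The main obstacle: honestly, there isn't one; this is a direct pointwise verification and does not rely on any of the machinery developed earlier. The only minor point worth being careful about is notational: $[f]_{p^n}$ and $[f]_{p^k}$ are functions on different rings ($\mathbb{Z}_{p^n}$ and $\mathbb{Z}_{p^k}$), so the comparison must be made through integer representatives, or equivalently via the natural ring surjection $\mathbb{Z}_{p^k}\longrightarrow\mathbb{Z}_{p^n}$ which sends the function $[f]_{p^k}$ to $[f]_{p^n}$ (this is precisely the map $\phi_n$ iterated $k-n$ times, from Remark~\ref{6-2}). Either viewpoint makes the argument a one-liner.
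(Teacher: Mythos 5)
Your proposal is correct and is essentially identical to the paper's argument: the paper also argues by contraposition (phrased as a proof by contradiction), noting that $f(a)\equiv g(a)\pmod{p^k}$ for all $a\in\mathbb{Z}$ implies $f(a)\equiv g(a)\pmod{p^n}$ since $n\le k$. No further comment is needed.
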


\begin{proof}
	Assume to the contrary that $[f]_{p^k}=[g]_{p^k}$. Then $f(a)\equiv g(a)\pmod{p^k}$
	for every $a\in\mathbb{Z}$, and hence $f(a)\equiv g(a)\pmod{p^n}$ for every 
	$a\in\mathbb{Z}$ since $n\le k$. But this means that $[f]_{p^n}= [g]_{p^n}$ which is a contradiction.
\end{proof}

\begin{remark}\label{6-7}

	Recall from Remark~\ref{Lagpol} that every polynomial function on the 
	finite filed $ \mathbb{F}_p=\mathbb{Z}_p$ is represented uniquely by a polynomial of the form
	$\sum_{i=0}^{p-1}a_ix^i$, where $0\le a_i\le  p-1$. Such a representation is obtained 
	by Lagrange's interpolation. Now let $l_1,\ldots,l_{(p-1)^p}$ be the unique polynomials
	representations of the elements of $\UPF[\mathbb{Z}_p]$ obtained by Lagrange's interpolation.
	Then obviously, $[l_i]_{p^n}\ne [l_j]_{p^n}$ if and only if $i\ne j$ for every $n\ge 1$
	by Lemma~\ref{6-8}.
\end{remark}

We need the following fact from~\cite{pol1} in which we use our notations.

\begin{lemma} \cite[Theorem 1]{pol1}\label{simplifyproof}
	If $F\in \PolFun[\mathbb{Z}_{p^n}]$, there exists one and only one polynomial
	$f\in\mathbb{Z}[x]$ with $[f]_{p^n}=F$ and $f=\sum_{i+v_p(j!)<n}a_{ij} p^i(x)_j$
	such that $i,j\ge 0$ and $0\le a_{ij}\le p-1$.
\end{lemma}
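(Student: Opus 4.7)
\textbf{Proof plan for Lemma~\ref{simplifyproof}.}

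The plan is to induct on $n$, producing the canonical representative in the inductive step by lifting through the epimorphism $\phi_n$ of Remark~\ref{6-2} and correcting by a kernel element supplied by Lemma~\ref{6-3}.

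For the base case $n=1$, the constraint $i+v_p(j!)<1$ forces $i=0$ and $0\le j\le p-1$, so the admissible polynomials are $\sum_{j=0}^{p-1}a_{0j}(x)_j$ with $0\le a_{0j}\le p-1$. There are exactly $p^{\,p}=|\PolFun[\mathbb{Z}_p]|$ of them by Remark~\ref{Lagpol}, so it is enough to establish surjectivity onto $\PolFun[\mathbb{Z}_p]$. Given $F\in\PolFun[\mathbb{Z}_p]$, take the Lagrange representative of degree $<p$ from Remark~\ref{Lagpol} and rewrite it in the falling-factorial basis $\{(x)_j\}_{j\ge 0}$ of $\mathbb{Z}[x]$ (a unitriangular change of basis from $\{x^j\}$ via Stirling numbers), then reduce the coefficients modulo $p$. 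Uniqueness then follows by counting.

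For the inductive step, assume the result for $n-1$ and let $F\in\PolFun[\mathbb{Z}_{p^n}]$. Apply induction to $\phi_n(F)\in\PolFun[\mathbb{Z}_{p^{n-1}}]$ to obtain a unique $g=\sum_{i+v_p(j!)<n-1}a_{ij}p^i(x)_j$ with $0\le a_{ij}\le p-1$ and $[g]_{p^{n-1}}=\phi_n(F)$. Then $F-[g]_{p^n}\in\ker\phi_n$, so by Lemma~\ref{6-3} it is represented uniquely by some $h=\sum_{i+v_p(j!)=n-1}a_{ij}p^i(x)_j$ with $0\le a_{ij}\le p-1$. Put $f=g+h$; this is of the required form. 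For uniqueness, the key observation is that a monomial $p^i(x)_j$ with $i+v_p(j!)\ge n-1$ is always null on $\mathbb{Z}_{p^{n-1}}$, because $(a)_j=j!\binom{a}{j}$ is divisible by $j!$ in $\mathbb{Z}$, so $p^i(a)_j$ is divisible by $p^{\,i+v_p(j!)}\ge p^{\,n-1}$ for every $a\in\mathbb{Z}$. Hence given two canonical representatives $f_1,f_2$ of $F$, decomposing $f_k=g_k+h_k$ by weight (terms with $i+v_p(j!)<n-1$ versus $=n-1$), one gets $[g_1]_{p^{n-1}}=[g_2]_{p^{n-1}}=\phi_n(F)$, whence $g_1=g_2$ by the inductive uniqueness, and then $[h_1]_{p^n}=[h_2]_{p^n}$, whence $h_1=h_2$ by the uniqueness clause of Lemma~\ref{6-3}.

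The main obstacle is the uniqueness side of the inductive step, which hinges on the clean weight decomposition $f=g+h$ and the observation that terms of weight exactly $n-1$ lie in $\ker\phi_n$. Once the $p$-adic bookkeeping $v_p(p^i(a)_j)\ge i+v_p(j!)$ is in hand, everything else is routine combination of the induction hypothesis with Lemma~\ref{6-3}; the existence side is essentially a telescoping lift through the chain $\PolFun[\mathbb{Z}_{p^n}]\twoheadrightarrow\PolFun[\mathbb{Z}_{p^{n-1}}]\twoheadrightarrow\cdots\twoheadrightarrow\PolFun[\mathbb{Z}_p]$.
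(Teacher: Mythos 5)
Your argument is correct, but note that the paper does not prove this lemma at all: it is imported verbatim as Theorem~1 of \cite{pol1}, so there is no internal proof to compare against, and what you have written is an actual derivation of the quoted result. The induction is sound. The base case is the standard count: the weight condition $i+v_p(j!)<1$ forces $i=0$, $j<p$, giving $p^p$ admissible polynomials mapping onto the $p^p$ functions of $\PolFun[\mathbb{Z}_p]$ via Lagrange interpolation and the unitriangular change of basis to falling factorials. The inductive step correctly splits a canonical representative by weight, $f=g+h$ with $g$ of weight $<n-1$ and $h$ of weight exactly $n-1$; the valuation bound $v_p\bigl(p^i(a)_j\bigr)\ge i+v_p(j!)$ (since $(a)_j=j!\binom{a}{j}$) shows $h$ is null on $\mathbb{Z}_{p^{n-1}}$, hence $[g]_{p^{n-1}}=\phi_n(F)$, and both existence and uniqueness then reduce to the induction hypothesis combined with Lemma~\ref{6-3}. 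The one caveat is that your proof is not self-contained from first principles: it leans on the uniqueness clause of Lemma~\ref{6-3}, which the paper also imports from \cite{pol1} without proof and which is essentially the top layer of the very statement being proved (your valuation bound shows the weight-$(n-1)$ polynomials land in $\ker\phi_n$, but that they represent its elements \emph{uniquely} is equivalent to knowing $|\ker\phi_n|=p^{\beta(n)}$). Relative to the results the paper takes as given, however, your argument is complete and correct.
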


Keep the notations of Remark~\ref{6-2}, Notation~\ref{nota}, and Remark~\ref{6-7}.
The following theorem gives canonical representations for the elements of
$\UPF[\mathbb{Z}_{p^n}]$ as linear combinations of the unique representations of 
the elements of $\UPF[\mathbb{Z}_{p}]$ and those of the elements of $\ker \phi_k$ 
for $k=2,\ldots, n$, mentioned in Remark~\ref{6-7} and  Lemma~\ref{6-3}, respectively.

\begin{theorem}
	Let $n\ge 2$. Then every element in $\UPF[\mathbb{Z}_{p^n}]$ can be uniquely represented 
	by a polynomial of the form
	\begin{equation}\label{eq}
	l_s(x)+\sum_{k=2}^{n}\ \sum_{i+v_p(j!)=k-1}a_{kij} p^i(x)_j, 
	\text{ where } i,j\ge 0;\  0\le a_{kij}\le p-1 \text{ and }s=1,\ldots,(p-1)^p.
	\end{equation}
	
\end{theorem}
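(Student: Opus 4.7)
The plan is to reduce the theorem to a counting argument combined with a uniqueness argument, using the explicit order formula from Theorem~\ref{6-5}. By Remark~\ref{6-7} there are $(p-1)^p$ choices for $l_s$, and by Lemma~\ref{6-3} the inner sum for each $k \in \{2,\dots,n\}$ has exactly $p^{\beta(k)}$ possibilities, so the total number of polynomials of the form \eqref{eq} is $(p-1)^p\, p^{\sum_{k=2}^{n}\beta(k)}$, which equals $|\UPF[\mathbb{Z}_{p^n}]|$ by Theorem~\ref{6-5}. Hence it suffices to prove (a) every polynomial of form \eqref{eq} induces an element of $\UPF[\mathbb{Z}_{p^n}]$, and (b) distinct polynomials of form \eqref{eq} induce distinct functions on $\mathbb{Z}_{p^n}$.

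For (a), the crucial observation is that for every pair $(i,j)$ with $i+v_p(j!)=k-1$, one has $p^i(x)_j = p^i j!\binom{x}{j}$, which is divisible by $p^{i+v_p(j!)}=p^{k-1}$ at every integer $x$. In particular, for each $k\ge 2$ every term in the inner sum is divisible by $p$, so the entire double sum vanishes modulo $p$. Consequently, the polynomial in \eqref{eq} agrees with $l_s$ modulo $p$; since $l_s$ represents an element of $\UPF[\mathbb{Z}_p]$, the polynomial is unit-valued modulo $p$, and Lemma~\ref{6-1} lifts this to $\UPF[\mathbb{Z}_{p^n}]$.

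For (b), suppose two polynomials $P$ and $P'$ of form \eqref{eq}, with indices $s,s'$ and coefficient families $(a_{kij}),(a'_{kij})$, induce the same function on $\mathbb{Z}_{p^n}$. Reducing modulo $p$ kills both double sums by the argument above, leaving $[l_s]_p = [l_{s'}]_p$; the uniqueness of the Lagrange representation (Remark~\ref{6-7}) yields $l_s = l_{s'}$, whence $s=s'$. Assume for contradiction that some coefficient differs, and let $k_0\ge 2$ be the smallest $k$ for which $a_{kij}\ne a'_{kij}$ for some $i,j$. All terms with $k>k_0$ are divisible by $p^{k_0}$ at every integer, so modulo $p^{k_0}$ the difference $P-P'$ reduces to $\sum_{i+v_p(j!)=k_0-1}(a_{k_0ij}-a'_{k_0ij})p^i(x)_j$; since $[P]_{p^n}=[P']_{p^n}$ forces $[P]_{p^{k_0}}=[P']_{p^{k_0}}$, this sum represents the zero function on $\mathbb{Z}_{p^{k_0}}$. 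But $\sum a_{k_0ij}p^i(x)_j$ and $\sum a'_{k_0ij}p^i(x)_j$ are two canonical representatives of the same element of $\ker\phi_{k_0}$, both having coefficients in $\{0,\dots,p-1\}$, so the uniqueness in Lemma~\ref{6-3} forces $a_{k_0ij}=a'_{k_0ij}$, contradicting the choice of $k_0$. The main subtlety lies in this successive reduction modulo $p^{k_0}$: we must carefully match the layer we extract with the ``canonical'' form hypothesized by Lemma~\ref{6-3} so that the uniqueness clause applies directly, which is why the coefficient range $\{0,\dots,p-1\}$ in \eqref{eq} is essential rather than cosmetic.
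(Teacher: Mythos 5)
Your proof is correct and follows essentially the same strategy as the paper: count the polynomials of form \eqref{eq} against $|\UPF[\mathbb{Z}_{p^n}]|$ from Theorem~\ref{6-5}, then establish injectivity by reducing modulo $p$ to separate the $l_s$ and, for the coefficients, passing to the minimal layer $k_0$ where they differ and invoking uniqueness of the canonical forms. The only (immaterial) differences are that you justify the vanishing of the higher layers modulo $p^{k_0}$ by direct divisibility of $p^i(x)_j$ and appeal to the uniqueness clause of Lemma~\ref{6-3}, where the paper routes the same step through the map $\phi_{k_0+1}$ and Lemma~\ref{simplifyproof}.
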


\begin{proof}
	We have  $$[l_s(x)+\sum_{k=2}^{n}\ \sum_{i+v_p(j!)=k-1}a_{kij} p^i(x)_j]_p=
	[l_s(x)]_p+[\sum_{k=2}^{n}\ \sum_{i+v_p(j!)=k-1}a_{kij} p^i(x)_j]_p=[l_s]_p\in \UPF[\mathbb{Z}_p].$$
	Thus every polynomial of the form (\ref{eq}) represents an element of $\UPF[\mathbb{Z}_{p^n}]$
	by Lemma~\ref{6-1}.
	
	Fix an integer $2\le k\le n$. Then, by Lemma~\ref{6-3}, the sum $\sum_{i+v_p(j!)=k-1}a_{kij} p^i(x)_j$
	is a member of $\ker\phi_k$, and hence can be chosen in $|\ker\phi_k|$ ways.
	Therefore the total number of polynomials of the form (\ref{eq}) is
	$(p-1)^p\prod_{k=2}^{n}|\ker\phi_k|=|\UPF[\mathbb{Z}_{p^n}]|$  by Theorem~\ref{6-5}.
	
	So to complete the proof
	we need only to show that every non equal two polynomials of the form (\ref{eq})
	induce two different functions on $\mathbb{Z}_{p^n}$. Now let $f,g\in \mathbb{Z}[x]$ of
	the form (\ref{eq}) with $f\ne g$. Then,
	for simplicity, we write $$f(x)=l_r(x)+\sum_{k=2}^{n} f_k(x) \text{ and }g(x) =l_s(x)+\sum_{k=2}^{n} g_k(x),
	\text{ where }[f_k]_{p^k},[g_k]_{p^k}\in \ker \phi_k,\ k=2,\ldots,n.$$ First, we notice that if $r\ne s$,
	then  by Remark~\ref{6-7}, $[f]_{p}=[l_r]_p\ne [l_s]_p= [g]_{p}$. Thus $[f]_{p^n}\ne  [g]_{p^n}$
	by Lemma~\ref{6-8}. So we may assume that $r=s$, and hence 
	$\sum_{k=2}^{n} f_k(x) \ne \sum_{k=2}^{n} g_k(x)$. Since
	$\sum_{k=2}^{n} f_k(x)\ne\sum_{k=2}^{n} g_k(x)$, there exists 
	$2\le k_0\le n$ such that $f_{k_0} \ne g_{k_0}$	and we choose $k_0$ to be minimal
	with this property. Now  $f_{k_0}$, $g_{k_0}$ are two different polynomials of the form 
	\begin{equation}\label{eqf}
	\sum_{i+v_p(j!)=k_0-1}a_{k_0ij} p^i(x)_j,  \text{ where } i,j\ge 0;\  0\le a_{k_0ij}\le p-1.
	\end{equation} 
	Then by Lemma~\ref{simplifyproof}, $[f_{k_0}]_{p^{k_0}}\ne [g_{k_0}]_{p^{k_0}}$; 
	and  this  together with the fact   $f_k=g_k$, for $k<k_0$, implies that
	$[l_s+\sum_{k=2}^{k_0} f_k ]_{p^{k_0}}\ne [l_s+\sum_{k=2}^{k_0} g_k ]_{p^{k_0}}$.
	Hence by the definition of the map $\phi_{k_0+1}$,
	$$[f]_{p^{k_0}}=\phi_{k_0+1}([f]_{p^{k_0+1}})
	=[l_s+\sum_{k=2}^{k_0} f_k ]_{p^{k_0}}\ne [l_s+\sum_{k=2}^{k_0} g_k
	]_{p^{k_0}}=\phi_{k_0+1}([g]_{p^{k_0+1}})=[g]_{p^{k_0}},$$ 
	that is, $[f]_{p^{k_0}}\ne [g]_{p^{k_0}}$.  Therefore  $[f]_{p^n}\ne  [g]_{p^n}$ 
	by Lemma~\ref{6-8}, this completes the proof.	      
\end{proof}

\begin{remark}
	Let $R=\mathbb{Z}_{4}=\{0,1,2,3\}$. In this case, 
	$\mathbb{Z}_{4}[\alpha]=\{a+b\alpha:a,b \in \mathbb{Z}_{4}\}$. 
	Consider now the polynomial $f(x)=(x^2-x)^2$. By  Fermat's little theorem,
	$f$ is a null polynomial on $\mathbb{Z}_{4}$; hence every unit-valued polynomial
	function is induced by a polynomial of degree less than $4$. Next we show that $f$ 
	is  null  on  $\mathbb{Z}_{4}[\alpha]$. So, if $a,b\in \mathbb{Z}_{4}$, then
	\begin{align*}
	f(a+b\alpha) &=\big((a+b\alpha)^2-(a+b\alpha)\big)^2=\big((a^2+2ab\alpha)-(a+b\alpha)\big)^2\\
	&=\big((a^2-a)+(2ab-b)\alpha\big)^2=(a^2-a)^2+ 2(a^2-a)(2ab-b)\alpha=0. 
	\end{align*}
	Thus $f$ is  null on $\mathbb{Z}_{4}[\alpha]$, whence every polynomial function
	on $\mathbb{Z}_{4}[\alpha]$ is represented by a polynomial of degree less than $4$.
	The null polynomials on $\mathbb{Z}_{4}$ of degree less than $4$ are
	\[f_1=0,\ f_2 =2(x^2-x),\ f_3=2(x^3-x) \text{ and } f_4= 2(x^3-x^2).\] Then simple 
	calculations shows that  $1+f_1',\ldots, 1+f_4'$ induce four different 
	unit-valued functions on $\mathbb{Z}_{4} $. Thus $|\Stab[\mathbb{Z}_{4}]|=4$, but
	$|\UPF[\mathbb{Z}_{4}]|=2^{\beta(2)}=16$ by Theorem~\ref{6-7}. 
	Furthermore, a routine verification (or see \cite[Lemma 5.7]{haki} for
	a more general case) shows that there is an epimorphism from 
	$\mathcal{P}_{\mathbb{Z}_4}(\mathbb{Z}_{4}[\alpha])$ onto $\PrPol[\mathbb{Z}_{4}]$ 
	which admits $\Stab[\mathbb{Z}_4]$ as a kernel. Thus 
	$|\mathcal{P}_{\mathbb{Z}_4}(\mathbb{Z}_{4}[\alpha])|=
	|\Stab[\mathbb{Z}_{4}]||\PrPol[\mathbb{Z}_{4}]|$, and hence
	\[| \PrPol[\mathbb{Z}_{4}] \ltimes_\theta \UPF[\mathbb{Z}_{4}]|=
	|\PrPol[\mathbb{Z}_{4}]||\UPF[\mathbb{Z}_{4}] |>|\Stab[\mathbb{Z}_{4}]||\PrPol[\mathbb{Z}_{4}]|=
	|\mathcal{P}_{\mathbb{Z}_4}(\mathbb{Z}_{4}[\alpha])|.\]
	This shows that in general the embedding of Section~\ref{finitefieldcase} 
	need not be isomorphism.
\end{remark}
\noindent {\bf Acknowledgment.}  The author is supported by the Austrian Science Fund (FWF):  P 30934-N35.
\bibliographystyle{amsplain}
\bibliography{UVar}

\end{document}